\begin{document}

\newtheorem{theorem}{Theorem}[section]
\newtheorem{example}{Example}
\newtheorem{observation}{Observation}
\newtheorem{exercise}{Exercise}
\newtheorem{remark}{Remark}
\newtheorem{fact}{Fact}
\newtheorem{proc}{Procedure}
\newtheorem{conj}{Conjecture}
\newtheorem{lemma}[theorem]{Lemma}
\newtheorem{prop}[theorem]{Proposition}
\newtheorem{corr}[theorem]{Corollary}
\newtheorem*{abst}{Abstract}
\newtheorem{definition}{Definition}
\newtheorem{claim}{Claim}
\newtheorem{algo}{Algorithm}

\title{Equal Sum Sequences and Imbalance Sets of Tournaments}

\author{Muhammad Ali Khan\\
 \small{Department of Mathematics and Statistics}\\
 \small{University of Calgary, 2500 University Drive NW Calgary, AB Canada T2N 1N4}\\
 \small{muhammkh@ucalgary.ca}}

\date{}
 \maketitle

\begin{abstract}
Reid conjectured that any finite set of non-negative integers is the score set of some tournament and Yao gave a non-constructive proof of Reid's conjecture using arithmetic arguments. No constructive proof has been found since. In this paper, we investigate a related problem, namely, which sets of integers are imbalance sets of tournaments. We completely solve the tournament imbalance set problem (TIS) and also estimate the minimal order of a tournament realizing an imbalance set. Our proofs are constructive and provide a pseudo-polynomial time algorithm to realize any imbalance set. Along the way, we generalize the well-known equal sum subsets problem (ESS) to define the equal sum sequences problem (ESSeq) and show it to be NP-complete. We then prove that ESSeq reduces to TIS and so, due to the pseudopolynomial time complexity, TIS is weakly NP-complete.   
\\

\noindent \small{\textit{Keywords and phrases}: Tournament, score set, imbalance sequence, imbalance set, Ried's theorem, partial tournament, pseudo-polynomial time, equal sum sequences problem, equal sum subsets problem. 
\\

\noindent \textit{AMS subject classification}: 05C07, 05C20}
\normalsize
\end{abstract}

\section{Introduction}
A tournament is an orientation of a complete simple graph. In a tournament, the \emph{score} $s_{i}$ of a vertex $v_{i}$ is the number of arcs directed away from that vertex, that is, the outdegree of $v_{i}$. The \emph{score sequence} of a tournament is formed by listing the scores in nondecreasing order. 
Let us write $[x_{i}]_{1}^{n}$ to denote a sequence with $n$ terms. Landau \cite{landau1} gave a simple characterization of the score sequences of tournaments. 

\begin{theorem}\label{landau}
A sequence $[s_{i}]_{1}^{n}$ of non-negative integers in nondecreasing order is the score sequence of a tournament if and only if for every $I\subseteq\{1,2,\ldots,n\}$, 
\begin{equation}\label{land}
\sum_{i\in I}{s_{i}}\geq{\left|I\right|\choose 2},
\end{equation}
\noindent with equality when $\left|I\right|=n$, where $\left|I\right|$ is the cardinality of the set $I$.
\end{theorem}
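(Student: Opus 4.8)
The plan is to prove necessity by a direct counting argument and sufficiency by induction. For necessity, let $T$ be a tournament with score sequence $[s_i]_1^n$ and fix $I\subseteq\{1,\ldots,n\}$; the vertices indexed by $I$ induce a sub-tournament with exactly $\binom{|I|}{2}$ arcs, each of which is counted in $\sum_{i\in I}d^+$, so $\sum_{i\in I}s_i\ge\binom{|I|}{2}$, with equality forced when $I=\{1,\ldots,n\}$ since then both sides count all arcs of $T$. I would also record the routine observation that, because the sequence is nondecreasing, the family of inequalities \eqref{land} is equivalent to the prefix inequalities $\sum_{i=1}^{k}s_i\ge\binom{k}{2}$ for $k=1,\ldots,n$ (with equality at $k=n$), since among all $k$-subsets the prefix $\{1,\ldots,k\}$ has the smallest sum. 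This lets the sufficiency argument work entirely with prefix sums.

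For sufficiency I would induct on $n$, and within a fixed $n$ carry out a secondary induction on $\sum_{i=1}^n s_i^2$, which is bounded above by $n(n-1)^2$. Two cases arise. In the \emph{tight} case there is some $k$ with $1\le k\le n-1$ and $\sum_{i=1}^k s_i=\binom{k}{2}$; using the identity $\binom{k+j}{2}=\binom{k}{2}+\binom{j}{2}+kj$ one checks that $[s_1,\ldots,s_k]$ and $[s_{k+1}-k,\ldots,s_n-k]$ are nondecreasing sequences in the correct range satisfying the prefix inequalities, hence are realizable by tournaments $T_1,T_2$ on $k$ and $n-k$ vertices by the induction hypothesis on $n$. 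Adding every arc from $V(T_2)$ to $V(T_1)$ yields a tournament whose score sequence is exactly $[s_i]_1^n$. In the \emph{slack} case, $\sum_{i=1}^k s_i\ge\binom{k}{2}+1$ for all $k\le n-1$; this forces $s_1\ge 1$ and $s_n\le n-2$, so $S'=[s_1-1,s_2,\ldots,s_{n-1},s_n+1]$ is again a legitimate nondecreasing sequence in range, still satisfies the prefix inequalities, and has strictly larger sum of squares, hence is realizable by some tournament $T'$ by the secondary induction hypothesis.

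The last step of the slack case — converting a realization of $S'$ into one of $S$ — is the main obstacle, handled by an arc-reversal argument. In $T'$ the vertex $v_1$ has outdegree $s_1-1$, the vertex $v_n$ has outdegree $s_n+1$, and every other outdegree already equals the corresponding $s_i$. If the arc $v_n v_1$ is present in $T'$, reversing it raises $d^+(v_1)$ and lowers $d^+(v_n)$ by one each, producing a realization of $S$. Otherwise the arc $v_1 v_n$ is present, and I would seek a vertex $v_k$ with $k\ne 1,n$ such that both arcs $v_n v_k$ and $v_k v_1$ are present; reversing this directed path raises $d^+(v_1)$ by one, lowers $d^+(v_n)$ by one, and leaves $d^+(v_k)$ unchanged, again giving $S$. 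Such a $v_k$ must exist, for otherwise every out-neighbour of $v_n$, together with $v_n$ itself, would be an out-neighbour of $v_1$, forcing $d^+(v_1)\ge d^+(v_n)+1$, i.e.\ $s_1-1\ge s_n+2$, contradicting $s_1\le s_n$. Thus the reversal is always available and the induction closes. I expect the fussiest part to be the bookkeeping of the two nested inductions and the verification that $S'$ retains every required property; but nothing there is deep, and the arc-reversal dichotomy is the only genuinely structural ingredient.
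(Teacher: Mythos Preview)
The paper does not give its own proof of Theorem~\ref{landau}: Landau's theorem is quoted as a classical result, with a pointer to several proofs in the literature, and is used only as background. So there is nothing to compare your argument against.

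That said, your proof is correct and is essentially one of the standard proofs (close in spirit to Thomassen's). Necessity is the obvious arc count in the induced sub-tournament, and your reduction to prefix sums is the usual one. In the tight case your splitting into $[s_1,\ldots,s_k]$ and $[s_{k+1}-k,\ldots,s_n-k]$ is right; note that $s_{k+1}\ge k$ follows from $\sum_{i\le k+1}s_i\ge\binom{k+1}{2}$ together with the tight equality at $k$, so the second sequence is indeed non-negative. In the slack case, the arc-reversal dichotomy is argued correctly: if $v_1v_n\in T'$ and no intermediate $v_k$ exists, then $N^+(v_n)\cup\{v_n\}\subseteq N^+(v_1)$ gives $s_1-1\ge s_n+2$, a contradiction.

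The only point worth tightening is the phrasing of the secondary induction. Since $S'$ has \emph{strictly larger} $\sum s_i^2$ than $S$, you are really performing a downward induction from the upper bound $n(n-1)^2$; equivalently, ordinary induction on $n(n-1)^2-\sum_i s_i^2$. You clearly have this in mind (you note the upper bound), but saying so explicitly makes the well-foundedness transparent and avoids the momentary confusion of invoking the ``induction hypothesis'' on a sequence with a larger invariant.
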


Several proofs of Landau's theorem have appeared over the years \cite{bang1, brualdi1, griggs1, landau1, thomassen1} and it continues to play a central role in the theory of tournaments and their generalizations. Brualdi and Shen \cite{brualdi2} strengthened Landau's theorem by deriving a set of inequalities that are individually stronger than inequalities (\ref{land}) but are collectively equivalent to these inequalities. 

The set of scores of vertices in a tournament is called the \textit{score set} of the tournament. Reid \cite{reid1} conjectured that any finite nonempty set $S$ of non-negative integers is the score set of some tournament. He gave a constructive proof of the conjecture for the cases $\left|S\right|=1,2,3$, while Hager \cite{hager1} settled the cases $\left|S\right|=4,5$. In 1986, Yao announced a nonconstructive proof of Reid's theorem by arithmetic arguments \cite{yao1}. Pirzada and Naikoo \cite{pirzada1} obtained the construction of a tournament with a given score set in the special case when the score increments are increasing. However, so far no constructive proof has been found for Reid's theorem in general. 

In a digraph, the \emph{imbalance} of a vertex $v_{i}$ is defined as $t_{i} = d_{i}^{+} -d_{i}^{-}$, where $d_{i}^{+}$ and $d_{i}^{-}$ are respectively the outdegree and indegree of $v_{i}$. The \emph{imbalace sequence} of a digraph is formed by listing the vertex imbalances in nonincreasing order. If $T$ is a tournament with imbalance sequence $[t_{i}]_{1}^{n}$, we say that $T$ \emph{realizes} $[t_{i}]_{1}^{n}$. Mubayi, Will and West \cite{west1} gave necessary and sufficient conditions for a sequence of integers to be the imbalance sequence of a simple digraph. 

\begin{theorem}\label{west} 
A sequence of integers $[t_{i}]_{1}^{n}$  with $t_{1}\geq\cdots\geq t_{n}$ is an imbalance sequence of a simple digraph if and only if $\sum_{i=1}^{j}t_i\leq j(n-j)$, for $1\leq j\leq n$ with equality when $j=n$.
\end{theorem}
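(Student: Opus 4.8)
\bigskip
\noindent\textbf{Proof strategy.}

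The plan is to prove necessity by a short counting argument and sufficiency by an explicit induction on $n$, the latter also yielding the pseudo-polynomial realization algorithm.

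\emph{Necessity.} Suppose a simple digraph $D$ on vertices $v_1,\ldots,v_n$ realizes $[t_i]_1^n$, and fix $j$ with $1\le j\le n$. Put $J=\{v_1,\ldots,v_j\}$ and write $e(X,Y)$ for the number of arcs with tail in $X$ and head in $Y$. Since $\sum_{i\le j}d_i^{+}=e(J,J)+e(J,V\setminus J)$ and $\sum_{i\le j}d_i^{-}=e(J,J)+e(V\setminus J,J)$, the arcs inside $J$ cancel, so $\sum_{i=1}^{j}t_i=e(J,V\setminus J)-e(V\setminus J,J)\le e(J,V\setminus J)\le j(n-j)$, because there is at most one arc for each of the $j(n-j)$ ordered pairs with tail in $J$ and head outside. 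For $j=n$ both $\sum d_i^{+}$ and $\sum d_i^{-}$ count all arcs of $D$, so the sum is $0=n(n-n)$.

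\emph{Sufficiency.} I would induct on $n$. For $n=1$ the equality at $j=1$ forces $t_1=0$, realized by a single vertex. For $n\ge 2$, if $t_1=0$ then monotonicity together with $\sum t_i=0$ forces every $t_i=0$, realized by the empty digraph; so assume $t_1\ge1$ and set $p=t_1$, noting $1\le p\le n-1$ by the hypothesis at $j=1$. Let $R$ be the multiset of $n-1$ integers obtained from $t_2,\ldots,t_n$ by adding $1$ to its $p$ smallest members, i.e. $R=\{t_2,\ldots,t_{n-p}\}\uplus\{t_{n-p+1}+1,\ldots,t_n+1\}$; its total is $\sum_{i=2}^{n}t_i+p=\sum_{i=1}^{n}t_i=0$, matching the equality requirement at index $n-1$. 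The claim to establish is that $R$, written in nonincreasing order as $u_1\ge\cdots\ge u_{n-1}$, satisfies $\sum_{i=1}^{j}u_i\le j\bigl((n-1)-j\bigr)$ for $1\le j\le n-1$. Granting this, the induction hypothesis gives a digraph $D'$ on $n-1$ vertices with imbalance multiset $R$; relabel its vertices $v_2,\ldots,v_n$ so that $v_i$ has imbalance $t_i$ for $2\le i\le n-p$ and $t_i+1$ for $n-p+1\le i\le n$ (possible since $R$ consists exactly of these values), and adjoin a new vertex $v_1$ with arcs $v_1\to v_{n-p+1},\ldots,v_1\to v_n$ and no in-arcs. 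Then $v_1$ has imbalance $p=t_1$, each $v_i$ with $i>n-p$ drops from $t_i+1$ to $t_i$, the other imbalances are unchanged, and the enlarged simple digraph realizes $[t_i]_1^n$.

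The technical heart, and the step I expect to be the main obstacle, is verifying the prefix-sum bounds for $R$. The $j$ largest elements of $R$ are the $a$ largest entries of the unraised block together with the $b$ largest of the raised block for the best split $a+b=j$, so (writing $T_m=\sum_{i=1}^{m}t_i$) the inequality to be checked is
\[
\bigl(T_{a+1}-t_1\bigr)+\bigl(T_{n-p+b}-T_{n-p}\bigr)+b\ \le\ (a+b)\bigl(n-1-a-b\bigr),\qquad a+b=j .
\]
This must be deduced from $T_m\le m(n-m)$ and $T_n=0$ together with the monotonicity of $[t_i]_1^n$, which supplies the auxiliary bounds $T_m\le mt_1$ and $T_m\ge0$ (a sum of initial segments of a sorted, zero-sum sequence is nonnegative). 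The delicate situation is when the raised block abuts an equal run of $t_i$'s, since then re-sorting interleaves raised and unraised entries and the crude Landau-type estimates lose too much; I expect that, after splitting on the position of $j$ relative to $n-p$ and on whether one leans on $T_m\le m(n-m)$ or on $T_m\le mt_1$, the claim reduces to a handful of elementary inequalities.

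Finally I would remark that the construction is effective: at each of the $n-1$ levels one only sorts the current sequence and reads off the block of entries to be raised, and since the entries can be exponential in the instance size this is the promised pseudo-polynomial algorithm. An alternative, less self-contained, route to sufficiency is to observe that $[t_i]_1^n$ is realizable precisely when some bipartite graph on parts $X=\{x_1,\ldots,x_n\}$ and $Y=\{y_1,\ldots,y_n\}$ has $\deg(x_i)-\deg(y_i)=t_i$ for all $i$ (the arc $v_i\to v_j$ corresponding to the edge $x_iy_j$, with edges $x_iy_i$ — loops in the digraph — adjoined or deleted freely since they do not affect imbalances), after which a Gale--Ryser-type criterion applies; but choosing the out- and in-degree sequences correctly turns out to require essentially the same combinatorial analysis as above.
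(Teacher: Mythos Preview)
The paper does not prove this theorem at all: it is quoted verbatim as a result of Mubayi, Will and West \cite{west1}, with no argument supplied. So there is no ``paper's own proof'' to compare against; your proposal is an attempt to prove a result the paper merely cites.

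On the proposal itself: the necessity half is correct and cleanly written. The sufficiency strategy---peel off the vertex of largest imbalance $t_1=p$, raise the $p$ smallest remaining entries by $1$, and recurse---is the natural Erd\H{o}s--Gallai-type reduction and is a sound plan. But you have not actually executed the key step. You explicitly flag the verification of $\sum_{i=1}^{j}u_i\le j(n-1-j)$ for the re-sorted multiset $R$ as ``the main obstacle'' and then leave it at ``I expect that \ldots\ the claim reduces to a handful of elementary inequalities.'' That is exactly the place where the argument can go wrong if handled carelessly: when a run of equal $t_i$'s straddles the boundary at $n-p$, the raised and unraised copies interleave after sorting, and the obvious bounds $T_m\le m(n-m)$ and $T_m\le mt_1$ applied separately to the two blocks can overshoot. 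Until that case analysis is written out in full, the sufficiency direction is a plausible outline rather than a proof. (Your auxiliary claim $T_m\ge 0$ is fine: the mean of the $m$ largest entries of a zero-sum sequence is at least the overall mean, which is $0$.)
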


On rearranging the imbalances in nondecreasing order, we obtain the equivalent inequalities $\sum_{i=1}^{j}t_i\geq j(j-n)$, for $1\leq j\leq n$ with equality when $j=n$.

Koh and Ree \cite{koh1} showed that if an additional parity condition is satisfied the sequence $[t_{i}]_{1}^{n}$ can be realized by a tournament. In fact, they proved the result in the more general setting of hypertournaments. The following corollary of Theorem 6 in \cite{koh1} provides a characterization of imbalance sequences of tournaments. 

\begin{theorem}\label{seq}
A nonincreasing sequence $[t_{i}]_{1}^{n}$ of integers is the imbalance sequence of a tournament if and only if $n-1, t_{1}, \ldots, t_{n}$ have the same parity and 
\begin{equation}\label{charac}
\sum_{i=1}^{j}{t_{i}} \leq j(n-j), 
\end{equation}
\noindent for $j = 1, \ldots , n$ with equality when $j = n$.
\end{theorem}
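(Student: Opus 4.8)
The plan is to route everything through Landau's theorem (Theorem \ref{landau}), exploiting the affine bijection between scores and imbalances available in a tournament. In a tournament on $n$ vertices every vertex $v_i$ satisfies $d_i^+ + d_i^- = n-1$, so its imbalance and score are related by $t_i = 2s_i - (n-1)$, equivalently $s_i = \tfrac{1}{2}(t_i + n - 1)$. This identity drives both directions.

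For necessity, suppose a tournament $T$ realizes $[t_i]_1^n$. The relation $t_i = 2s_i - (n-1)$ immediately forces $t_i \equiv n-1 \pmod 2$ for every $i$, which is the parity condition. Since $T$ is in particular a simple digraph, Theorem \ref{west} gives $\sum_{i=1}^{j} t_i \le j(n-j)$ for $1\le j\le n$, with equality at $j=n$; this is exactly (\ref{charac}).

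For sufficiency, assume $[t_i]_1^n$ is nonincreasing, has $n-1,t_1,\dots,t_n$ of the same parity, and obeys (\ref{charac}). Define $s_i = \tfrac{1}{2}(t_i + n-1)$; the parity hypothesis makes each $s_i$ an integer, and since $[t_i]$ is nonincreasing so is $[s_i]$. I first check nonnegativity: the $j=n$ case of (\ref{charac}) says $\sum_{i=1}^{n} t_i = 0$, while the $j=n-1$ case gives $\sum_{i=1}^{n-1} t_i \le n-1$, hence $t_n = -\sum_{i=1}^{n-1} t_i \ge -(n-1)$, so $s_n\ge 0$ and therefore every $s_i\ge 0$. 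It remains to verify Landau's inequalities for the nondecreasing rearrangement of $[s_i]$, namely $s_n \le s_{n-1}\le\cdots\le s_1$. Because the sequence is monotone, $\min_{|I|=k}\sum_{i\in I}s_i$ is attained at $I=\{n-k+1,\dots,n\}$, so it suffices to prove $\sum_{i=n-k+1}^{n} s_i \ge \binom{k}{2}$ for each $k$, with equality at $k=n$. Using $\sum_{i=1}^{n}t_i=0$ and (\ref{charac}) at $j=n-k$,
\[
\sum_{i=n-k+1}^{n} s_i = \frac{1}{2}\Bigl(k(n-1) - \sum_{i=1}^{n-k} t_i\Bigr) \ge \frac{1}{2}\bigl(k(n-1) - (n-k)k\bigr) = \binom{k}{2},
\]
and when $k=n$ the identity $\sum_{i=1}^{n}t_i=0$ makes this an equality. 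By Theorem \ref{landau}, $[s_i]$ is the score sequence of some tournament $T$ on $n$ vertices, and in $T$ the vertex with score $s_i$ has imbalance $2s_i-(n-1)=t_i$; thus $T$ realizes $[t_i]_1^n$.

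I do not expect a serious obstacle: the heart of the matter is simply that the change of variables $s\leftrightarrow t$ carries Landau's conditions (restricted to extremal index sets, which suffices by monotonicity) exactly onto (\ref{charac}), with the parity condition accounting for integrality of the $s_i$. The only points needing a little care are the verification that $s_n\ge 0$ and the reduction of Landau's condition to the set of $k$ smallest scores, both of which are short. One could instead give a direct inductive construction in the spirit of Koh and Ree, but passing through Landau's theorem keeps the proof self-contained relative to the results already cited.
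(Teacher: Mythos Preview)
Your proof is correct. The paper, however, does not actually prove this theorem itself: it simply records it as a corollary of Theorem~6 in Koh and Ree \cite{koh1}, who establish the analogous statement in the more general setting of $k$-hypertournaments. Your route through Landau's theorem is therefore a genuinely different and more self-contained argument relative to the results already stated in the paper, since Theorem~\ref{landau} is quoted in the introduction whereas the Koh--Ree result is external. The affine change of variables $s_i=\tfrac12(t_i+n-1)$ you exploit is precisely Lemma~\ref{aux}, so your proof dovetails cleanly with the paper's later development. What the Koh--Ree citation buys is generality (the hypertournament case for free); what your argument buys is transparency and the elimination of an outside dependency for the central characterization.
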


In a digraph, the set of imbalances of the vertices is called its \textit{imbalance set} \cite{pirzada2}. In \cite{pirzada2} the following result regarding the imbalance sets of oriented graphs is proved. 

\begin{theorem}\label{pir}
Let $P = \{p_1 , \ldots, p_m \}$ and $Q = \{-q_1 , \ldots, -q_n \}$, where $p_1 < \cdots < p_m$ and $q_1 < \cdots < q_n$ are positive integers. Then there exists an oriented graph with imbalance set $P \cup Q$.
\end{theorem}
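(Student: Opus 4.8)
The plan is to prove Theorem~\ref{pir} by an explicit construction built from oriented complete bipartite graphs, with no induction. The starting observation is this: if $G$ is a bipartite graph with parts $X$ and $Y$ and we orient every edge of $G$ from $X$ to $Y$, then in the resulting oriented graph each vertex $x\in X$ has imbalance $d^{+}(x)-d^{-}(x)=\deg_{G}(x)$ and each vertex $y\in Y$ has imbalance $-\deg_{G}(y)$. Hence, if $G$ has no isolated vertices, the imbalance set of this oriented graph is exactly $\{\deg_G(x):x\in X\}\cup\{-\deg_G(y):y\in Y\}$. So it suffices to exhibit a bipartite graph, with no isolated vertices, whose $X$-side degrees form precisely the set $\{p_1,\dots,p_m\}$ and whose $Y$-side degrees form precisely $\{q_1,\dots,q_n\}$.

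Such a graph is easy to assemble as a disjoint union of bicliques. For each $i\in\{1,\dots,m\}$ and $j\in\{1,\dots,n\}$, let $G_{ij}$ be a complete bipartite graph with one part $A_{ij}$ of size $q_j$ and the other part $B_{ij}$ of size $p_i$; then every vertex of $A_{ij}$ has degree $p_i$ and every vertex of $B_{ij}$ has degree $q_j$, and since $p_i,q_j\ge 1$ there are no isolated vertices. Place all the $A_{ij}$ on the $X$-side and all the $B_{ij}$ on the $Y$-side, let $G$ be the disjoint union of the $G_{ij}$, and let $D$ be $G$ with every edge directed from $X$ to $Y$. By the observation above, the vertices in $A_{ij}$ have imbalance $p_i$ and the vertices in $B_{ij}$ have imbalance $-q_j$, so the imbalance set of $D$ is exactly $\{p_1,\dots,p_m\}\cup\{-q_1,\dots,-q_n\}=P\cup Q$. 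Note that $D$ is genuinely an oriented graph (no digons), since $G$ is simple and each edge is oriented once. A routine count gives $|V(D)|=m\sum_j q_j+n\sum_i p_i$; by pairing indices one can instead use only $\max(m,n)$ of the blocks $G_{ij}$ and thereby shrink this, though the precise order is not needed here.

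I do not expect a genuine obstacle in this argument; the one thing to watch is that the realized imbalance set is \emph{exactly} $P\cup Q$, with nothing missing and, crucially, nothing extra. Every $p_i$ is attained (for instance on $A_{i1}$) and every $-q_j$ is attained (on $B_{1j}$), so nothing is missing; every vertex's imbalance equals $\pm$ one of its block degrees, so no value outside $P\cup Q$ appears; and the value $0$ is excluded precisely because all the $p_i$ and $q_j$ are positive, so no vertex is isolated. The tempting alternative of building the realization by introducing prescribed values one at a time (induction on $m+n$) is exactly what makes controlling the exact set awkward, so the point of the proof is to notice that the disjoint-union-of-oriented-bicliques construction removes that difficulty at a stroke.
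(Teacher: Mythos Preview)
Your argument is correct: orienting the disjoint union of the bicliques $K_{q_j,p_i}$ from the $q_j$-side to the $p_i$-side yields an oriented graph whose imbalance set is exactly $P\cup Q$, and you have checked both inclusions as well as the absence of isolated vertices.

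There is nothing to compare against, however, because the paper does not prove this theorem; it is quoted from \cite{pirzada2} as background. It is worth noting, though, that your construction is precisely the one the paper itself exploits later for its own results: the imbalance sequence in Theorems~\ref{odd} and~\ref{even}, with $M$ copies of each $x_i$ and $L$ copies of each $-y_j$ on $n=lM+mL$ vertices, is exactly the imbalance sequence of your disjoint union of oriented bicliques (after renaming $l,m,L,M$ to your $m,n,\sum p_i,\sum q_j$). So your approach is not merely correct but is the same combinatorial idea the paper builds on throughout.
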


Due to the interest in Reid's score set theorem, it is natural to ask if a similar result holds for imbalacne sets of tournaments. Furthermore, since a constructive proof of Reid's theorem has not yet been found, it would be interesting to look for an algorithm that generates a tournament from its imbalance set. In this paper we address both questions. We study the following decision problem and its search version. 

\begin{definition}[\textbf{Tournament Imbalance Set Problem (TIS)}]\label{TIS}  
Given a set $Z$ of integers, decide if $Z$ is the imbalance set of a tournament. 
\end{definition}

In Section \ref{odd case}, we first show that the obvious necessary conditions for the existence of tournament imbalance sets are not sufficient. We then completely characterize the sets of odd integers that are imbalance sets of tounraments. In Section \ref{even case}, we treat the case of even integers, which is more involved. We show that any set of even integers that contains at least one positive and at least one negative integer or only consists of a single element 0, is the imbalnce set of a partial tournament in which each vertex is joined to every other vertex except one. However, not all such sets are imbalance sets of tournaments. This is followed by necessary and sufficient conditions for a set of even integers to be a tournament imbalance set. In Section \ref{algorithm}, we define a new variant of the \textit{equal sum subsets problem} (ESS) called \textit{equal sum sequences problem} (ESSeq). We show that ESSeq is NP-hard and ESSeq reduces to TIS in polynomial time. Furthermore, we propose a pseudo-polynomial time algorithm that determines if a set of integers is a tournament imbalance set and, in addition, generates a tournmanet realizing any such set. Thus TIS is shown to be weakly NP-complete. We also consider extremal cases and determine upper bounds for the minimal order of a tournament realizing an imbalance set.

\section{Characterizing odd imbalance sets}
\label{odd case}

Consider a tournament of order (number of vertices) $n$. Let $v_{i}$ be a vertex with score $s_{i}$ and imbalance $t_{i}$ then $t_{i}=d_{i}^{+}-d_{i}^{-}=s_{i}-(n-1-s_{i})=2s_{i}-(n-1)$, or by rearranging $s_{i} = \frac{n-1+t_{i}}{2}$. Converesly, assume that $v_{i}$ is a vertex of a tournament with $n$ vertices and $t_{i}$ is the imbalance of $v_{i}$. Let $s_{i} = \frac{n-1+t_{i}}{2}$ then $s_{i}$ is the score of $v_{i}$. Thus we have 

\begin{lemma}\label{aux}
Let $t_{i}$ be the imbalance of a vertex $v_{i}$ in a tournament. Then $s_{i}$ is the score of $v_{i}$ if and only if  
\begin{equation}\label{inter1}
s_{i} = \frac{n-1+t_{i}}{2},
\end{equation}
\noindent where $n$ is the order of the tournament. 
\end{lemma}

A tournament is said to be \emph{regular} if all the vertices have the same score \cite{chartrand1}. Clearly, there exists a regular tournament on $n$ vertices with score $s_{i}$ if and only if $n$ is odd and $s_{i}=\frac{n-1}{2}$. Therefore, the imbalance of any vertex $v_{i}$ of a regular tournament is $t_{i}=2s_{i}-(n-1)=0$ and the imbalance set of any regular tournament is $\{0\}$.   

The following is a set of obvious necessary conditions for an imbalance set of a tournament. 

\begin{theorem}\label{necessary}
If a finite nonempty set $Z$ of integers is the imbalance set of a tournament of order $n$ then all the elements of $Z$ have the same parity as $n-1$ and it either contains at least one positive and at least one negative integer or contains only a single element 0. 
\end{theorem}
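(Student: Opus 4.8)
The plan is to derive the two asserted conditions separately, since both follow immediately from facts already recorded. For the parity statement I would argue vertex by vertex: let $T$ be a tournament of order $n$ realizing $Z$, and let $v_i$ be any vertex, with imbalance $t_i$. By Lemma \ref{aux} the score of $v_i$ equals $s_i = \frac{n-1+t_i}{2}$, and this is an integer; hence $n-1+t_i$ is even, i.e.\ $t_i \equiv n-1 \pmod 2$. Since every element of $Z$ is by definition the imbalance of some vertex of $T$, every element of $Z$ has the same parity as $n-1$.

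For the second condition the key observation is that the imbalances of $T$ sum to zero: summing $t_i = d_i^{+} - d_i^{-}$ over all $n$ vertices gives $\sum_i d_i^{+} - \sum_i d_i^{-} = 0$, because both sums count the number of arcs of $T$ (equivalently, this is the $j=n$ equality in Theorem \ref{seq}, or in Theorem \ref{west}). Now suppose $Z$ contains no positive integer, so that every vertex of $T$ has imbalance $\le 0$; then $\sum_i t_i = 0$ with each $t_i \le 0$ forces $t_i = 0$ for all $i$, whence $Z = \{0\}$. Symmetrically, if $Z$ contains no negative integer then again $Z = \{0\}$. Therefore, whenever $Z \neq \{0\}$ it must contain at least one positive and at least one negative integer, which is exactly the claimed dichotomy; note that this argument also excludes non-negative sets such as $\{0,2\}$ that are not equal to $\{0\}$.

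Since each step is immediate, I do not expect a genuine obstacle here; the only point that deserves a moment's care is recognizing that "$Z$ is not merely $\{0\}$" already forces the presence of values of \emph{both} signs — not just of a single nonzero value — and this is precisely what the sum-zero identity supplies. The theorem is stated mainly to set up the later sections, where the substantive content lies: the sufficiency direction, and the fact (treated in Section \ref{even case}) that these necessary conditions fail to be sufficient in the even case.
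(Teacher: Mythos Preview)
Your proof is correct and follows essentially the same approach as the paper's. The only cosmetic difference is that the paper cites Theorem~\ref{seq} directly for the parity claim while you derive it from Lemma~\ref{aux}, and your treatment of the sign dichotomy is slightly more explicit than the paper's one-line appeal to regularity versus the zero-sum condition.
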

\begin{proof}
If $Z$ is the imbalance set of a tournament with $n$ vertices then by Theorem \ref{seq}, the elements of $Z$ must have the same parity as $n-1$. Furthermore, either the tournament is regular and $Z=\{0\}$ or $Z$ must contain at least one positive and at least one negative integer so the corresponding imbalance sequence sums to zero. 
\end{proof}

The question is whether these conditions are also sufficient. The answer is `no' as can be seen from the following example. 

\begin{example}
Let $Z=\{6, -10\}$. Then $Z$ satisfies the necessary conditions given in Theorem \ref{necessary} and it can potentially be the imbalance set of a tournament with an odd number of vertices. However, any sequence with elements chosen from $Z$ can sum to zero only if it consists of an even number of elements (e.g., $6$, $6$, $6$, $6$, $6$, $-10$, $-10$, $-10$ ). Thus by Theorem \ref{seq}, we cannot construct a tournament imbalance sequence from $Z$ and so $Z$ is not a tournament imbalance set.  
\end{example}

Although the conditions given in Theorem \ref{necessary} are not sufficient in general, they are sufficient if $Z$ consists of odd integers. We first show that  

\begin{theorem}\label{odd}
Let $X=\{x_{1}, \ldots, x_{l}\}$ and $Y=\{-y_{1},\ldots,-y_{m}\}$ be disjoint nonempty sets of odd integers, where $x_{1}>\cdots> x_{l}$ are positive odd integers and $y_{1}<\cdots< y_{m}$ are also positive odd integers. Let $L=\sum_{i=1}^{l}{x_{i}}$, $M=\sum_{i=1}^{m}{y_{i}}$ and $n=lM+mL$. Then there exists a tournament of order $n$ with imbalance set $X\cup Y$. 
\end{theorem}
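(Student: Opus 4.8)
The plan is to construct an explicit imbalance sequence supported on $X \cup Y$ and then invoke Theorem~\ref{seq}. The key idea is to choose multiplicities for the elements so that the sequence sums to zero and every element of $X \cup Y$ actually appears. The natural choice is dictated by the cross products: take each positive value $x_i$ with multiplicity $M = \sum y_j$ and each negative value $-y_j$ with multiplicity $L = \sum x_i$. Then the total contribution of the positives is $M\sum_i x_i = ML$ and of the negatives is $-L\sum_j y_j = -LM$, so the sum is zero, and the total number of terms is $lM + mL = n$, matching the claimed order. Since $x_1 > \cdots > x_l$ and $y_1 < \cdots < y_m$, arranging the multiset in nonincreasing order gives a concrete nonincreasing sequence $[t_i]_1^n$ whose underlying set is exactly $X \cup Y$.

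Next I would verify the two hypotheses of Theorem~\ref{seq}. The parity condition requires $n-1$ and all the $t_i$ to have the same parity: all the $t_i$ are odd by assumption, so I must check that $n = lM + mL$ is even, i.e.\ $n-1$ is odd. Since each $x_i$ and each $y_j$ is odd, $L \equiv l \pmod 2$ and $M \equiv m \pmod 2$, hence $lM + mL \equiv lm + ml = 2lm \equiv 0 \pmod 2$; so $n$ is even and the parity condition holds. The equality $\sum_{i=1}^n t_i = 0 = n \cdot (n-n)$ was already arranged. It remains to check the inequalities $\sum_{i=1}^j t_i \le j(n-j)$ for $1 \le j < n$.

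The main work, and the step I expect to be the real obstacle, is establishing these partial-sum inequalities for the chosen sequence. Because the sequence is nonincreasing and the positive block comes first, the partial sums $\sum_{i=1}^j t_i$ increase while $j$ runs through the positive portion and then decrease; so the worst case is when $j$ equals the number of positive terms, namely $j = lM$, where the partial sum attains its maximum value $ML$. I would show that for every $j$ the bound holds, the tightest instance being $ML \le lM(n - lM) = lM \cdot mL = lmML$, which holds since $lm \ge 1$. For intermediate $j$ one bounds $\sum_{i=1}^j t_i \le j x_1$ (each term is at most $x_1$) within the positive block, and compares against $j(n-j)$; since $n$ is on the order of $lM + mL$ while $x_1 \le L$, the slack is ample, but one has to handle the boundary values of $j$ (small $j$, and $j$ just past $lM$) carefully. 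A clean way is: for $1 \le j \le lM$ use $\sum_{i=1}^j t_i \le jx_1 \le jL \le j(n-j)$ whenever $n - j \ge L$, which holds because $n - j \ge n - lM = mL \ge L$; and for $lM < j < n$ use $\sum_{i=1}^j t_i \le \sum_{i=1}^{lM} t_i = ML \le j(n-j)$ since $j(n-j) \ge (n-1) \cdot 1 \ge ML$ in the relevant range (noting $n-1 \ge mL \cdot L \ge ML$ needs a short check, or one simply observes $j(n-j) \ge lM \cdot mL = lmML \ge ML$). Assembling these cases completes the verification, and Theorem~\ref{seq} then yields a tournament of order $n$ whose imbalance sequence is $[t_i]_1^n$, hence whose imbalance set is $X \cup Y$.
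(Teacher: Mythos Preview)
Your construction and overall strategy match the paper's exactly: form the sequence
\[
[t_i]_1^n = x_1^{(M)},\ldots,x_l^{(M)},(-y_1)^{(L)},\ldots,(-y_m)^{(L)}
\]
and verify the hypotheses of Theorem~\ref{seq}. Your parity check is correct (and more explicit than the paper's). Your verification of $\sum_{i\le j}t_i\le j(n-j)$ on the range $1\le j\le lM$ is also correct.

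The gap is in the range $lM<j<n$. There you bound the partial sum by its maximum $ML$ and then assert $ML\le j(n-j)$, justifying this either by ``$j(n-j)\ge n-1\ge ML$'' or by ``$j(n-j)\ge lM\cdot mL=lmML$''. Both fail. Take $X=\{3\}$, $Y=\{-5\}$, so $l=m=1$, $L=3$, $M=5$, $n=8$, $ML=15$. At $j=7$ one has $j(n-j)=7$, so neither $7\ge 15$ nor $7\ge lmML=15$ holds; and $n-1=7<15$. The actual partial sum at $j=7$ is $5$, so the inequality $5\le 7$ is true, but your upper bound $ML=15$ is far too crude near $j=n$.

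The easy repair, in the spirit of your first-range argument, is to exploit the zero total sum and bound from the tail: for $lM<j<n$,
\[
\sum_{i=1}^{j}t_i=-\sum_{i=j+1}^{n}t_i\le (n-j)\,y_m\le (n-j)\,M\le (n-j)\,j,
\]
the last step because $j\ge lM+1> M$ (indeed $j\ge lM\ge M$). This mirrors your positive-block estimate and closes the gap. For comparison, the paper handles the inequalities differently: it checks them only at the block boundaries $j=M,2M,\ldots,lM,lM+L,\ldots,n$ and then argues by contradiction that a first failure at some intermediate $j_0$ would propagate (since $t_{j_0+1}=t_{j_0}$ inside a block) to the next boundary, which was already verified.
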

\begin{proof} We observe that $n$ is even and all the elements of $X\cup Y$ have the same parity as $n-1$. Let $x^{(p)}$ denote that $x$ is appearing in $p$ consecutive terms of a sequence. We use Theorem \ref{seq} to prove that the $n$-term sequence 
\[
[t_{i}]_{1}^{n}={x_{1}}^{(M)},\ldots,{x_{l}}^{(M)},{-y_{1}}^{(L)},\ldots,{-y_{m}}^{(L)}
\]
\noindent is the imbalance sequence of a tournament arranged in nonincreasing order. First note that   
\[
\sum_{i=1}^{M}{t_{i}}= Mt_{1}=Mx_{1}\leq M((l-1)M+mL)=M(n-M),
\]
\[
\sum_{i=1}^{2M}{t_{i}}= M(t_{1}+t_{2})=M(x_{1}+x_{2})\leq 2M((l-2)M+mL)=2M(n-2M),
\]
\[\ldots \ \ \ \ \ldots \ \ \ \ \ldots \ \ \ \ \ldots \ \ \ \ \ldots
\]
\[
\sum_{i=1}^{lM}{t_{i}}= M\sum_{i=1}^{l}x_{i}=LM\leq lM(mL)=lM(n-lM),
\]
\[
\sum_{i=1}^{lM+L}{t_{i}}= LM-Ly_{1}\leq (lM+L)(m-1)L=(lM+L)(n-lM-L),
\]
\[
\sum_{i=1}^{lM+2L}{t_{i}}= LM-L(y_{1}+y_{2})\leq (lM+L)(m-2)L=(lM+2L)(n-lM-2L),
\]
\[
\ldots \ \ \ \ \ldots \ \ \ \ \ldots \ \ \ \ \ldots \ \ \ \ \ldots
\]
and
\[
\sum_{i=1}^{n}{t_{i}}=M\sum_{i=1}^{l}{x_{i}}-L\sum_{i=1}^{m}{y_{i}}=0=n(n-lM-mL).
\]

So inequality (\ref{charac}) holds for $j=M,2M,\ldots,lM,lM+L,lM+2L,\ldots,lM+mL(=n)$ with equality when $j=n$. Now suppose that for some other value $j=j_{0}$ we have $\sum_{i=1}^{j_{0}}{t_{i}} > j_{0}(n-j_{0})$ and $j_{0}$ is the smallest such integer. But then $t_{j_{0}}>n-2j_{0}+1$ and as $j_{0}\neq M,2M,\ldots,lM,lM+L,lM+2L,\ldots,n$, we have $t_{j_{0}+1}= t_{j_{0}}>n-2j_{0}+1>n-2j_{0}-1=n-2(j_{0}+1)+1$. Thus $\sum_{i=1}^{j_{0}+1}{t_{i}} > (j_{0}+1)(n-j_{0}-1)$, showing that $j_{0}+1\neq M,2M,\ldots,lM,lM+L,lM+2L,\ldots,n$. Continuing in this way leads to a contradiction as we must reach one of $M,2M,\ldots,lM,lM+L,lM+2L,\ldots,n$ in finitely many steps. 
\end{proof}

Together Theorems \ref{necessary} and \ref{odd} immediately give the following necessary and sufficient conditions for odd tournament imbalance sets. 

\begin{theorem}\label{oddsuff}
A finite nonempty set of odd integers is the imbalance set of a tournament if and only if it contains at least one positive and at least one negative integer. 
\end{theorem}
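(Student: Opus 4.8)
The statement to prove is Theorem~\ref{oddsuff}: a finite nonempty set of odd integers is a tournament imbalance set if and only if it contains at least one positive and at least one negative element. The plan is to obtain this as an immediate consequence of the two theorems already established, so the work is almost entirely bookkeeping rather than new construction.

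First I would dispatch the necessity direction. Suppose $Z$ is a finite nonempty set of odd integers that is the imbalance set of a tournament of order $n$. By Theorem~\ref{necessary}, $Z$ either contains at least one positive and at least one negative integer, or $Z = \{0\}$. Since $0$ is even while every element of $Z$ is odd, the case $Z = \{0\}$ is impossible. Hence $Z$ must contain at least one positive and at least one negative integer, which is exactly the asserted condition.

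For sufficiency, let $Z$ be a finite nonempty set of odd integers containing at least one positive and at least one negative element. Partition $Z$ into its positive part $X = \{x_1, \ldots, x_l\}$ (listed in decreasing order) and its negative part, written as $Y = \{-y_1, \ldots, -y_m\}$ with $0 < y_1 < \cdots < y_m$; both $X$ and $Y$ are nonempty, disjoint, and consist of positive odd integers once the sign is stripped from $Y$. Theorem~\ref{odd} then directly produces a tournament of order $n = lM + mL$ (where $L = \sum x_i$, $M = \sum y_i$) whose imbalance set is $X \cup Y = Z$. This completes the argument.

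There is essentially no obstacle here, since all the real content — verifying that the explicit block sequence $x_1^{(M)}, \ldots, x_l^{(M)}, (-y_1)^{(L)}, \ldots, (-y_m)^{(L)}$ satisfies Landau-type inequalities with the correct parity — has already been carried out in the proof of Theorem~\ref{odd}. The only point that needs a word of care is confirming that every element of $Z$ actually appears as some vertex imbalance (so the imbalance \emph{set} is exactly $Z$, not merely a superset), but this is transparent from the construction: each $x_i$ occupies $M \geq 1$ positions and each $-y_j$ occupies $L \geq 1$ positions, so all of $X \cup Y$ is realized and nothing outside it is. Hence the proof reduces to a two-line combination of Theorems~\ref{necessary} and~\ref{odd}, exactly as the excerpt's remark ``Together Theorems~\ref{necessary} and~\ref{odd} immediately give\ldots'' anticipates.
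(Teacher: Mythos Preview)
Your proposal is correct and follows exactly the approach the paper indicates: the paper gives no separate proof of Theorem~\ref{oddsuff} beyond the remark that it follows immediately from Theorems~\ref{necessary} and~\ref{odd}, and your write-up simply spells out that combination (ruling out $Z=\{0\}$ by parity for necessity, and invoking the explicit construction of Theorem~\ref{odd} for sufficiency). There is nothing to add or change.
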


\section{The case of even imbalances}
\label{even case} 
Mubayi, Will and West \cite{west1} considered simple digraphs with maximum number of arcs that realize imbalance sequences.  

\begin{lemma}\cite{west1}\label{aux2}
Let $D$ be a simple digraph with maximum number of arcs realizing the imbalance sequence $[t_{i}]_{1}^{n}$. Then any vertex in $D$ has at most one non-neighbour and the number of arcs in $D$ equals $\sum_{i=1}^{n}\left\lfloor \frac{n-1+t_{i}}{2}\right\rfloor$.   
\end{lemma}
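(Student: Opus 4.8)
The plan is to establish the two claims of Lemma \ref{aux2} separately: first that a max-arc digraph realizing $[t_i]_1^n$ has every vertex with at most one non-neighbour, and then that the arc count is $\sum_{i=1}^n \lfloor (n-1+t_i)/2 \rfloor$. For the first part, I would argue by contradiction via a local exchange. Suppose $D$ has the maximum number of arcs realizing $[t_i]_1^n$ but some vertex $u$ has two distinct non-neighbours $v$ and $w$ (meaning no arc runs in either direction between $u$ and $v$, nor between $u$ and $w$). The idea is to add a pair of arcs incident to $u$—one to $v$ and one to $w$—chosen with opposite orientations so that the imbalance of $u$ is unchanged, while adjusting one arc near $v$ and $w$ to fix their imbalances. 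Concretely, I expect the exchange to look like: add arc $u\to v$ and arc $w\to u$; this changes $t_u$ by $+1-1=0$, raises $t_v$ by $-1$... wait, more carefully, add $u\to v$ raises $t_u$ and lowers $t_v$, add $w \to u$ raises $t_w$ and lowers $t_u$, so $t_u$ net unchanged, but $t_v$ drops by $1$ and $t_w$ rises by $1$. To repair this one then needs to find an arc configuration to transfer a unit of imbalance back from $w$ to $v$; since $D$ is simple this requires a short alternating path argument, using that $v$ and $w$ already have prescribed imbalances. If such a repair is always possible we obtain a digraph with two more arcs and the same imbalance sequence, contradicting maximality. The main obstacle is making this repair step airtight: one must ensure the alternating path exists, which may require casework on whether $v\to w$, $w\to v$, or neither arc is present, and on the local arc structure. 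An alternative cleaner route is to invoke the characterization in Theorem \ref{west}: among all simple digraphs with a fixed imbalance sequence, pick one maximizing arcs, and show directly that if some vertex had two non-neighbours the degree sequence $(d_i^+ + d_i^-)$ could be pushed up while respecting simplicity and the fixed differences $t_i = d_i^+ - d_i^-$.

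For the arc-count formula, once we know every vertex has at most one non-neighbour, I would count arcs by summing outdegrees. Each vertex $v_i$ satisfies $d_i^+ - d_i^- = t_i$ and $d_i^+ + d_i^- \in \{n-1, n-2\}$ depending on whether $v_i$ has zero or one non-neighbour. Adding these, $2d_i^+ = (d_i^+ + d_i^-) + t_i$, so $d_i^+ = \frac{(d_i^+ + d_i^-) + t_i}{2}$. Since $d_i^+$ is a nonnegative integer, $(d_i^+ + d_i^-) + t_i$ is even; as the degree is at most $n-1$, we get $d_i^+ \le \frac{n-1+t_i}{2}$, and because exactly one of $n-1+t_i$, $n-2+t_i$ is even, $d_i^+ = \lfloor \frac{n-1+t_i}{2} \rfloor$ precisely when $v_i$ has the appropriate parity of degree forced. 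I would then argue that maximality forces $d_i^+$ to take the largest admissible value $\lfloor \frac{n-1+t_i}{2} \rfloor$ for every $i$: if some $d_i^+$ were strictly smaller, then $v_i$ has a non-neighbour $v_j$ and inserting an arc between them in the orientation that keeps both imbalances correct—which is possible exactly when the parities work out, and they do since $n-1-t_i$ and $n-1-t_j$ have the same parity by Theorem \ref{west}'s consistency—yields more arcs, again contradicting maximality. Summing $d_i^+$ over all $i$ gives the claimed total $\sum_{i=1}^n \lfloor \frac{n-1+t_i}{2}\rfloor$.

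I expect the genuinely delicate point to be the simultaneous bookkeeping: when we say "insert an arc keeping both imbalances correct" we are changing $t_i$ and $t_j$, so in fact the single-arc insertion must be paired with a compensating change elsewhere, exactly as in the first part. The cleanest organization is therefore to prove a single exchange lemma—"if $D$ realizes $[t_i]_1^n$ and is not arc-maximal in the sense that some vertex has a non-neighbour while another has available capacity, then two arcs can be added preserving the imbalance sequence"—and derive both conclusions of Lemma \ref{aux2} from it. I would phrase the exchange as: pick non-adjacent $v_p, v_q$; add the arc $v_p \to v_q$; this breaks the imbalances at $p$ and $q$ by $+1$ and $-1$ respectively; then find an alternating trail from $q$ back to $p$ (reversing arcs along it) that restores both, using that the total imbalance is balanced and the digraph is connected enough—if it is not connected, handle each component, noting the imbalance sequence restricted to a component still sums appropriately. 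Verifying the existence of this trail is the heart of the matter; everything else is parity arithmetic already implicit in Lemma \ref{aux}.
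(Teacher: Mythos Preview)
The paper does not prove Lemma~\ref{aux2} at all: it is quoted from Mubayi--Will--West \cite{west1} and used as a black box. So there is no ``paper's own proof'' to compare against; what follows is an assessment of your proposal on its merits.

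Your outline is essentially correct but over-engineered. For the first claim, the exchange you want is purely local---no alternating trails or connectivity arguments are needed. Suppose $u$ has two non-neighbours $v,w$. Split on the $v$--$w$ relation. If $v$ and $w$ are also non-adjacent, insert the directed triangle $u\to v\to w\to u$; every imbalance is preserved and you gain three arcs. If (say) $v\to w$ is present, delete it and insert $v\to u$ and $u\to w$; again every imbalance is preserved and you gain one arc. (If instead $w\to v$ is present, use the symmetric choice.) That is the entire repair; your worry that one must ``find an alternating trail from $q$ back to $p$'' is unfounded, and in fact your own parenthetical casework on the $v$--$w$ arc already contains the full answer.

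For the arc count, your computation is right but the logic is slightly tangled. Once you know each vertex has degree $d_i^+ + d_i^- \in \{n-1,\,n-2\}$, the value is forced by \emph{parity}, not by a further maximality argument: since $2d_i^+ = (d_i^+ + d_i^-) + t_i$ must be even, the degree equals $n-1$ when $t_i \equiv n-1 \pmod 2$ and $n-2$ otherwise, and in either case $d_i^+ = \lfloor (n-1+t_i)/2\rfloor$. Summing over $i$ gives the formula directly. Your suggestion to ``insert an arc keeping both imbalances correct'' if some $d_i^+$ were too small is circular---a single arc insertion always changes two imbalances---and unnecessary.
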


A \textit{partial tournament} is a simple digraph obtained by removing one or more arcs from a tournament \cite{brualdi1}. We say that a partial tournament of order $n$ is a \emph{near tournament} if each vertex is joined to all the other vertices except exactly one. Clearly, every near tournament has even order. 

In this section, we characterize the sets of even integers that are imbalance sets of tournaments. Recall that $\{0\}$ is the imbalance set of every regular tournament. Therefore, in the remainder of this section we focus on nonzero sets of even integers. Example 1 shows that not every set of even integers that satisfies the necessary conditions of Theorem \ref{necessary} is the imbalance set of a tournament. We can nevertheless prove that any such set is the imbalance set of a near tournament.  

\begin{theorem}\label{even}
Let $X=\{x_{1}, \ldots, x_{l}\}$ and $Y=\{-y_{1},\ldots,-y_{m}\}$ be disjoint nonempty sets of even integers, where $x_{1}>\cdots> x_{l}$ are non-negative even integers and $y_{1}<\cdots< y_{m}$ are positive even integers. Suppose that $X\cup Y\neq \{0\}$. Let $L=\sum_{i=1}^{l}{x_{i}}$, $M=\sum_{i=1}^{m}{y_{i}}$ and $n=lM+mL$. Then there exists a near tournament of order $n$ with imbalance set $X\cup Y$.
\end{theorem}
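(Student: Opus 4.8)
The plan is to imitate the strategy of Theorem~\ref{odd} as closely as possible: build an explicit candidate imbalance sequence by repeating each $x_i$ some number of times and each $-y_j$ some number of times, verify that it satisfies Theorem~\ref{west} (so that it is the imbalance sequence of \emph{some} simple digraph), and then argue that a digraph with the maximum number of arcs realizing this sequence is in fact a near tournament by invoking Lemma~\ref{aux2}. Concretely, I would again take the $n$-term sequence
\[
[t_{i}]_{1}^{n}={x_{1}}^{(M)},\ldots,{x_{l}}^{(M)},{-y_{1}}^{(L)},\ldots,{-y_{m}}^{(L)},
\]
with $L=\sum x_i$, $M=\sum y_j$, $n=lM+mL$. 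Since the $x_i$ and $y_j$ are even, $n$ is even, so a near tournament of this order can exist; note also that the entries need not have the same parity as $n-1$, which is precisely why we must settle for a near tournament rather than a tournament.

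First I would check the inequalities $\sum_{i=1}^{j}t_i\le j(n-j)$ of Theorem~\ref{west}. The partial sums at the "breakpoints" $j=M,2M,\ldots,lM,lM+L,lM+2L,\ldots,n$ are computed exactly as in the proof of Theorem~\ref{odd} (the arithmetic is identical since it never used oddness), giving equality when $j=n$. For a non-breakpoint $j_0$ one repeats the verbatim monotonicity argument: if $\sum_{i=1}^{j_0}t_i>j_0(n-j_0)$ then $t_{j_0}>n-2j_0$, hence $t_{j_0+1}=t_{j_0}>n-2(j_0+1)$ (here the inequality is with step $2$ rather than needing the $+1$ slack, so it still propagates), so the violation persists and eventually reaches a breakpoint, a contradiction. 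Thus by Theorem~\ref{west} the sequence is realized by a simple digraph $D$; pick $D$ with the maximum number of arcs.

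Next, Lemma~\ref{aux2} tells us that in this maximum-arc $D$ every vertex has at most one non-neighbour, and the number of arcs equals $\sum_{i=1}^{n}\lfloor\frac{n-1+t_i}{2}\rfloor$. Because $n$ is even and each $t_i$ is even, $n-1+t_i$ is odd, so $\lfloor\frac{n-1+t_i}{2}\rfloor=\frac{n-2+t_i}{2}$, and summing (using $\sum t_i=0$) gives exactly $\frac{n(n-2)}{2}=\binom{n}{2}-\frac{n}{2}$ arcs — that is, $D$ misses exactly $n/2$ of the $\binom{n}{2}$ possible unordered pairs. Combined with "at most one non-neighbour per vertex," a counting/double-counting argument forces each vertex to have \emph{exactly} one non-neighbour: the $n/2$ missing pairs cover all $n$ vertices, and since each vertex lies in at most one missing pair this is a perfect matching of non-adjacencies. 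Hence $D$ is a near tournament with imbalance set $X\cup Y$ (every element of $X\cup Y$ occurs, by construction, and no other value does).

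The step I expect to be the main obstacle is the last one — turning "$\le 1$ non-neighbour each" plus the exact arc count into "exactly $1$ non-neighbour each," i.e. ruling out the possibility that the maximum-arc realization has some fully-adjacent vertices and must therefore be bigger than $\binom{n}{2}-n/2$. One has to be careful that the arc-count formula in Lemma~\ref{aux2} is genuinely the maximum (it is, by that lemma) and that a partial tournament on $n$ vertices whose non-adjacency graph has maximum degree $1$ and exactly $n/2$ edges is forced to be a perfect matching — this is immediate once stated, but it is the crux that upgrades "partial tournament" to "near tournament." A secondary point worth double-checking is the edge case where $X$ contains $0$ (allowed here, unlike in Theorem~\ref{odd}) and the degenerate sub-cases where $l=1$ or $m=1$, but the breakpoint computation and the parity argument go through unchanged.
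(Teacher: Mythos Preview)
Your proposal is correct and follows essentially the same route as the paper: build the sequence $[t_i]_1^n$, verify the Mubayi--Will--West inequalities exactly as in Theorem~\ref{odd}, take a maximum-arc realization $D$, and compute via Lemma~\ref{aux2} that $D$ has $\tfrac{n(n-2)}{2}$ arcs, forcing every vertex to have exactly one non-neighbour. Your worry about the final step is unfounded: Lemma~\ref{aux2} gives the arc count of a maximum realization as an \emph{equality}, not a lower bound, so once you know there are exactly $n/2$ missing pairs and each vertex lies in at most one of them, the perfect-matching conclusion is immediate---the paper dispatches this in a single sentence.
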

\begin{proof}
Since $L$ and $M$ are even, $n$ is even and so we cannot construct a tournament of order $n$ with imbalance set $X\cup Y$. By mirroring the proof of Case 1 of Theorem \ref{odd}, we can show that the $n$-term sequence 
\[
[t_{i}]_{1}^{n}={x_{1}}^{(M)},\ldots,{x_{l}}^{(M)},{-y_{1}}^{(L)},\ldots,{-y_{m}}^{(L)}
\]
\noindent is the imbalance sequence of a simple digraph. Let $D$ be a realization of $[t_{i}]_{1}^{n}$ with maximum number of arcs. Since all the imbalances $t_{i}$ are even while $n-1$ is odd, by Lemma \ref{aux2} the number of arcs in $D$ is 
\[
\sum_{i=1}^{n}\left\lfloor \frac{n-1+t_{i}}{2}\right\rfloor =\sum_{i=1}^{n}\frac{n-2+t_{i}}{2}=\frac{n(n-2)}{2}, 
\]

\noindent which is $\frac{n}{2}$ less than the number of arcs of a tournament of order $n$. Therefore, Lemma \ref{aux2} implies that every vertex of $D$ has exactly one non-neighbour and $D$ must be a near tournament. 
\end{proof}

The following result shows that under certain conditions we can transform $D$ into a tournament by adding a suitable number of vertices.

\begin{theorem}\label{evensuff}
Let $X$, $Y$, $l$, $m$, $L$, $M$ and $n$ be as defined in Theorem \ref{even}. 
\item (i) If $0\in X\cup Y$ then there exists a tournament of order $n+1$ with imbalance set $X\cup Y$. 
\item (ii) If there exists an $x_{p}\in X$ and  (not necessarily distinct) $-y_{q},-y_{r}\in Y$ such that $x_{p}=y_{q}+y_{r}$ then there exists a tournament of order $n+3$ with imbalance set $X\cup Y$. 
\item (iii) If there exists a $-y_{p}\in Y$ and (not necessarily distinct) $x_{q},x_{r}\in X$ such that $y_{p}=x_{q}+x_{r}$ then there exists a tournament of order $n+3$ with imbalance set $X\cup Y$. 
\end{theorem}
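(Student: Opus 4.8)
The plan is to start from the near tournament $D$ of order $n$ with imbalance set $X \cup Y$ guaranteed by Theorem \ref{even}, and in each case repair the "missing arcs" by introducing a small number of new vertices so that the result is a genuine tournament whose imbalance set is still exactly $X \cup Y$. Recall from Lemma \ref{aux2} that $D$ has exactly $\frac{n}{2}$ non-neighbour pairs, i.e. the non-neighbour relation is a perfect matching on the $n$ vertices; write these pairs as $\{a_1,b_1\},\ldots,\{a_{n/2},b_{n/2}\}$. Adding vertices will increase $n$, hence shift the relation between score and imbalance (Lemma \ref{aux}), so I must be careful to track imbalances, not scores, throughout.

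For part (i): suppose $0 \in X \cup Y$, so some vertex of $D$, and in fact a positive number of vertices, has imbalance $0$. Add one new vertex $w$. I would join $w$ to all $n$ old vertices, orienting these $n$ new arcs so that $w$ receives and sends equally — concretely, for each non-neighbour pair $\{a_i,b_i\}$ route one arc into $w$ and one out of $w$ (say $a_i \to w$ and $w \to b_i$); this adds $+1$ to the imbalance of each $a_i$ and $-1$ to each $b_i$, and gives $w$ imbalance $0$. That is not quite what I want since it changes old imbalances. The cleaner fix: first pick the orientation of the non-neighbour completions and the edges to $w$ simultaneously. In fact the standard trick is: for each missing pair $\{a_i,b_i\}$, instead of adding the arc $a_ib_i$ directly, add the two arcs $a_i \to w$ and $w \to b_i$ — but then $a_i$ gains $+1$, $b_i$ loses $1$, and $w$ ends with imbalance $0$. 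So to keep old imbalances fixed I should choose the pairing of "which endpoint feeds $w$" to cancel: this requires each vertex to be the $+1$ end in as many pairs as the $-1$ end, which need not hold. So instead I add the direct arc $a_i \to b_i$ for each $i$ with one orientation AND also the edge to $w$; the genuinely clean route is: join $w$ to every old vertex so that $d^+_D$-based imbalances are preserved by also completing the matching. I would organize this as: complete each non-neighbour pair $\{a_i,b_i\}$ with an arc, and join $w$ to all vertices, then argue by a parity/flow count that the arcs incident to $w$ can be oriented to give $w$ imbalance $0$ while the completions of the matching can be chosen to restore every old imbalance to its original value; the total imbalance being $0$ and each individual old imbalance being an element of $X\cup Y$, together with $0 \in X\cup Y$, yields imbalance set exactly $X\cup Y$.

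For parts (ii) and (iii), which are symmetric under reversing all arcs, I add three new vertices $w_1,w_2,w_3$ and build a small gadget on them realizing the "split" $x_p = y_q + y_r$ (resp. $y_p = x_q + x_r$). The idea: take a vertex $u$ of $D$ with imbalance $x_p$ and two vertices $u',u''$ (possibly equal, in which case use two distinct vertices both of imbalance $-y_q=-y_r$) with imbalances $-y_q,-y_r$; reroute so that $u$ effectively "donates" imbalance $y_q$ to $u'$ and $y_r$ to $u''$ through the gadget, leaving $u$ at imbalance $0$ — no wait, that destroys the value $x_p$. Rather, the three new vertices themselves should carry imbalances from $X\cup Y$: I want $w_1,w_2,w_3$ to have imbalances lying in $X \cup Y$ (so the imbalance set does not grow), the $n$ old arcs-completions chosen to preserve all old imbalances, and the $3n + 3$ new arcs oriented so the whole digraph is a tournament summing to zero. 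The equation $x_p = y_q + y_r$ is exactly the arithmetic identity needed for such a gadget to close up: give $w_1$ imbalance $x_p$, $w_2$ imbalance $-y_q$, $w_3$ imbalance $-y_r$ (all in $X\cup Y$), route the internal triangle $w_1\to w_2\to w_3\to w_1$ or its reverse, and distribute the $3n$ arcs between $\{w_1,w_2,w_3\}$ and the old vertices so that (a) each old vertex sees the three new vertices and its former non-neighbour, with net contribution $0$ to its imbalance, and (b) the new vertices reach their target imbalances. The consistency of (a) and (b) reduces to a counting identity guaranteed by $n = lM + mL$ together with $x_p = y_q + y_r$.

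The main obstacle I expect is (a): ensuring that when I complete the perfect matching of non-neighbour pairs and simultaneously attach the new vertices, I can orient everything so that every old vertex's imbalance is \emph{exactly} preserved while the new vertices hit prescribed targets. This is a degree-prescribed orientation (equivalently, an Eulerian-type / network-flow) feasibility question: one forms an auxiliary graph on $\{$old vertices$\}\cup\{$new vertices$\}$ with the edges to be oriented, and asks for an orientation with specified out-degrees; by the classical Hakimi / Eulerian-orientation criterion this is solvable precisely when the prescribed out-degrees sum correctly and satisfy the obvious bounds — and the hypotheses $x_p = y_q + y_r$ (or $0 \in X\cup Y$) are exactly what make the arithmetic work out. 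Verifying these bounds in each case is the technical heart; once feasibility is established, the resulting digraph is a tournament of the claimed order, its imbalance sequence sums to zero, every imbalance lies in $X\cup Y$, and every element of $X\cup Y$ occurs (the old occurrences are untouched), so its imbalance set is $X\cup Y$.
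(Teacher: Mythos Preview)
Your overall framework matches the paper's: start from the near tournament $D$ of Theorem~\ref{even}, whose non-neighbour relation is a perfect matching $\{v_i,v'_i\}_{i=1}^{n/2}$, add a few vertices, and complete to a tournament while preserving all old imbalances and giving the new vertices imbalances already in $X\cup Y$. The gap is in execution, and it is most visible in part~(i).

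For (i) you circle around the right construction and then abandon it for an unverified flow argument. The paper's move is entirely explicit and is essentially the sentence you started and then dropped: for \emph{each} pair $\{v_i,v'_i\}$ insert the three arcs $(v_i,v'_i)$, $(v'_i,w)$, $(w,v_i)$. Then $v_i$ gains $+1$ from $(v_i,v'_i)$ and $-1$ from $(w,v_i)$; $v'_i$ gains $-1$ and $+1$; and $w$ gains $+1-1=0$ per pair. All old imbalances are preserved exactly, $w$ has imbalance $0\in X\cup Y$, and the result is a tournament of order $n+1$. No parity/flow feasibility check is needed. Your remark ``this requires each vertex to be the $+1$ end in as many pairs as the $-1$ end'' came from omitting the matching arc $(v_i,v'_i)$; once you include it, the cancellation is automatic at every vertex.

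For (ii)--(iii) you have the right targets ($w_1,w_2,w_3$ with imbalances $x_p,-y_q,-y_r$) and the right internal triangle, but you defer the actual construction to ``the classical Hakimi / Eulerian-orientation criterion'' without checking the inequalities. The paper again proceeds explicitly: it singles out $\tfrac{x_p}{2}=\tfrac{y_q+y_r}{2}$ of the non-neighbour pairs, and for each such pair inserts $(v_i,v'_i)$ together with a fixed pattern of six arcs to $u_1,u_2,u_3$ chosen so that $v_i,v'_i$ each see net change $0$ while $u_1$ gains $+2$ and one of $u_2,u_3$ gains $-2$ (doing this $\tfrac{y_q}{2}$ times for $u_2$ and $\tfrac{y_r}{2}$ times for $u_3$); the remaining $\tfrac{n-x_p}{2}$ pairs get a different fixed seven-arc pattern that keeps \emph{every} imbalance (old and new) unchanged. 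The identity $x_p=y_q+y_r$ is used only to make the count $\tfrac{y_q}{2}+\tfrac{y_r}{2}=\tfrac{x_p}{2}$ close up. Your feasibility route is not wrong in principle, but as written it is an assertion, not a proof; and since a two-line explicit pattern per pair suffices, that is the cleaner way to close the argument.
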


\begin{proof} Let $T$ be a near tournament realizing the imbalance sequence $[t_{i}]_{1}^{n}$ as defined in the proof of Theorem \ref{even}. For a vertex $v_{i}$ in $T$ let $v'_{i}$ denote the unique non-neighbour of $v_{i}$. Also let $(u,v)$ denote an arc directed from vertex $u$ to vertex $v$. In the three cases we can transform $T$ into a tournament as follows.   

\noindent \textit{(i)} Add a vertex $v$ to $T$ in such a way that for every pair of non-adjacent vertices $v_{i}$ and $v'_{i}$ we insert the arcs $(v_{i},v'_{i})$, $(v'_{i},v)$ and $(v,v_{i})$. Thus the imbalance of all the vertices of $T$ is preserved and the new vertex $v$ has imbalance 0. Since every vertex of $T$ has been linked with every other vertex of $T$ as well as the new vertex $v$, the resulting digraph is a tournament. 

\begin{figure}[htb]
	\centering
		\includegraphics[scale=1.3]{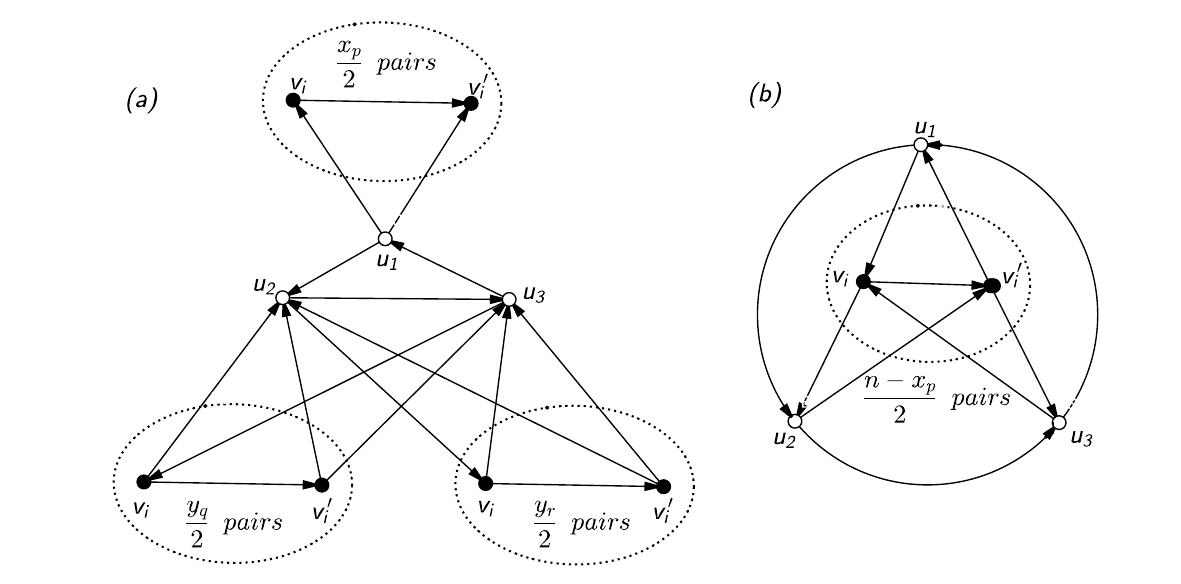}
	\caption{Construction of tournament in case (ii). Figure (a) represents step 1-2 and figure (b) step 3.}
	\label{tournamentex}
\end{figure}

\noindent \textit{(ii)} Add three new vertices $u_{1}$, $u_{2}$ and $u_{3}$ to $T$ and insert arcs in the following manner: 
\begin{enumerate}
\item Insert $(u_{1},u_{2})$, $(u_{2},u_{3})$ and $(u_{3},u_{1})$.
\item Choose any $\frac{x_{p}}{2}=\frac{y_{q}+y_{r}}{2}$ pairs $\{v_{i},v'_{i}\}$ of non-neighbouring vertices in $T$ and insert $(v_{i},v'_{i})$. Out of these choose any $\frac{y_{q}}{2}$ pairs. For each of these pairs insert the arcs $(u_{1},v_{i})$, $(u_{1},v'_{i})$, $(v_{i},u_{2})$, $(v'_{i},u_{2})$, $(v'_{i},u_{3})$ and $(u_{3},v_{i})$. For the other $\frac{y_{r}}{2}$ pairs insert the arcs $(u_{1},v_{i})$, $(u_{1},v'_{i})$, $(v_{i},u_{3})$, $(v'_{i},u_{3})$, $(v'_{i},u_{2})$ and $(u_{2},v_{i})$. 
\item For the remaining $\frac{n-x_{p}}{2}$ pairs $\{v_{i},v'_{i}\}$ of non-neighbours, insert the arcs  $(u_{1},v_{i})$, $(v'_{i},u_{1})$, $(v_{i},v'_{i})$, $(v_{i},u_{2})$, $(u_{2},v'_{i})$, $(u_{3},v_{i})$ and $(v'_{i},u_{3})$. 
\end{enumerate}
Since every vertex is joined with every other vertex, the resulting digraph is a tournament. Furthermore, the imbalance of each vertex of $T$ is preserved, while the new vertices $u_{1}$, $u_{2}$ and $u_{3}$ have imbalances $x_{p}$, $-y_{q}$ and $-y_{r}$ respectively. 

\noindent \textit{(iii)} The proof is essentially the same as that of case (ii). 
\end{proof}

Theorem \ref{evensuff} can be generalized and it is the generalized version that is of interest to us as it leads to the characterization of even imbalance sets. However, we stated and proved Theorem \ref{evensuff} to provide the reader a concrete perspective of what is happening in the more abstract setting of Theorem \ref{evensuffgen}.  

\begin{theorem}\label{evensuffgen}
Let $X$, $Y$, $l$, $m$, $L$, $M$ and $n$ be as defined in Theorem \ref{even}. The set $X\cup Y$ is the imbalance set of a tournament if any one of the following conditions is satisfied: 

\item (i) $0\in X\cup Y$,

\item(ii) there exist an odd number of (not necessarily distinct) $x_{p_1},\ldots,x_{p_{2r+1}}\in X$ and an even number of (not necessarily distinct) $-y_{q_1},\ldots,-y_{q_{2s}}\in Y$ such that $\sum_{j=1}^{2r+1}x_{p_j}=\sum_{j=1}^{2s}y_{q_j}$, 

\item(iii) there exist an odd number of (not necessarily distinct) $-y_{p_1},\ldots,-y_{p_{2r+1}}\in Y$ and an even number of (not necessarily distinct) $x_{q_1},\ldots,x_{q_{2s}}\in X$ such that $\sum_{j=1}^{2r+1}y_{p_j}=\sum_{j=1}^{2s}x_{q_j}$. 
\end{theorem}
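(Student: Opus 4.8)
The plan is to reduce the general statement to the concrete case already handled in Theorem~\ref{evensuff} by the standard trick of grouping the summands into matched triples, one triple at a time. The starting point in all three cases is the near tournament $T$ of order $n$ with imbalance set $X\cup Y$ furnished by Theorem~\ref{even}, in which every vertex $v_i$ has a unique non-neighbour $v'_i$, and the atomic operation is the gadget from case (ii) of Theorem~\ref{evensuff}: attaching three new vertices $u_1,u_2,u_3$ that pick up imbalances $x_p,-y_q,-y_r$ whenever $x_p=y_q+y_r$, while preserving every old imbalance and consuming $\tfrac{x_p}{2}$ of the available non-adjacent pairs (with the remaining pairs re-linked through all three new vertices as in step~3). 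The key observation is that after one application we again have a near tournament: the three new vertices are mutually adjacent and each is adjacent to every old vertex, but the $\tfrac{n-x_p}{2}$ leftover pairs $\{v_i,v'_i\}$ are still non-adjacent, so the order has grown by $3$ and the number of non-adjacent pairs has dropped by $\tfrac{x_p}{2}$. Iterating and then closing up with case~(i) of Theorem~\ref{evensuff} will finish the argument.

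More precisely, for case~(ii) I would proceed as follows. Assume $\sum_{j=1}^{2r+1}x_{p_j}=\sum_{j=1}^{2s}y_{q_j}=:K$. Pair up the $x_{p_j}$'s and $y_{q_j}$'s into $\text{(something)}$ relations of the form ``one positive element equals a sum of two negatives'' or ``one negative element equals a sum of two positives'' by a greedy matching on the multiset $\{x_{p_1},\dots,x_{p_{2r+1}}\}\cup\{y_{q_1},\dots,y_{q_{2s}}\}$ whose positive part and negative part have equal total $K$: repeatedly take the current largest element, say it is some $x_{p}$ (the positive case; the negative case is symmetric), and either it exactly equals some $y_q$ in which case we defer it, or it exceeds the largest remaining $y_q$, so we split off $x_p = y_q + (x_p - y_q)$ and replace $x_p$ by the new positive ``virtual'' element $x_p-y_q$. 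Because the positive and negative totals stay equal and strictly decrease, this terminates; the parity bookkeeping (odd number of positives, even number of negatives) guarantees that the leftover after all two-for-one splits is not a single unmatched element but collapses cleanly. Each relation $x_p=y_q+y_r$ (or $y_p=x_q+x_r$) with all terms $\le n$ triggers one application of the Theorem~\ref{evensuff}(ii) (resp.\ (iii)) gadget, which is legitimate since the number of non-adjacent pairs is always at least $\tfrac{n}{2}\ge\tfrac{x_p}{2}$ and never increases below what the next gadget needs. Finally, after all new vertices carrying the desired imbalances have been attached, apply the case~(i) construction of Theorem~\ref{evensuff} once to the remaining near tournament to add a vertex of imbalance $0$ and obtain a genuine tournament; its imbalance set is exactly $X\cup Y$ provided $0\in X\cup Y$, and otherwise we must instead make sure every element of $X\cup Y$ has been realized by some attached vertex (or a vertex of $T$) before we close up — which is why at least one gadget must be used and why cases~(ii), (iii) are needed. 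Case~(iii) is handled identically with the roles of $X$ and $Y$ exchanged.

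The main obstacle I expect is the combinatorial matching/parity argument that turns the single equation $\sum x_{p_j}=\sum y_{q_j}$ with the stated parity constraints into a valid sequence of elementary triple-relations $x=y+y'$ and $y=x+x'$ \emph{in which every term that appears is a genuine element of $X$ or $Y$ (not merely a virtual difference)}. The naive greedy split produces virtual elements $x_p-y_q$ that need not lie in $X$, so the gadget cannot be applied to them directly. The resolution is to \emph{not} split: instead one shows that an odd-vs-even collection of elements of $X$ and $Y$ summing to a common value can always be partitioned (after possibly cancelling equal pairs $x_{p_j}=y_{q_k}$, each of which is itself handled by a length-one gadget via case~(i)-style vertex pairs) into blocks of total size three, each block being one positive and two negatives or two positives and one negative — this is exactly an integer-partition identity that follows by induction on $r+s$ using the imbalance in counts ($2r+1$ versus $2s$) to always find a feasible block. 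Getting this block-decomposition lemma right, and checking that the accumulated non-adjacent-pair budget never runs out across the whole sequence of gadget insertions, is where the real work lies; everything else is a careful but routine repetition of the constructions already verified in Theorems~\ref{even} and~\ref{evensuff}.
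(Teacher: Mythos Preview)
Your reduction to iterated applications of the three-vertex gadget from Theorem~\ref{evensuff} cannot work, because the ``block-decomposition lemma'' you are banking on is simply false. Take $X=\{10\}$ and $Y=\{-4,-2\}$; condition~(ii) holds via the single equation $10=4+2+2+2$ (one term from $X$, four from $-Y$). The multiset $\{10;4,2,2,2\}$ has five elements, contains no equal $x$--$y$ pair to cancel, and admits \emph{no} triple of the required form: $10=y+y'$ forces $y'\in\{6,8\}\notin -Y$, and $y=x+x'$ is impossible with only one positive element available. So there is no way to peel off even a single gadget, let alone partition the whole multiset. The parity count $2r+1$ versus $2s$ gives you an odd total $2r+2s+1$, which is never a multiple of~$3$ in general, so the very first sanity check on a partition into triples already fails.

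There is a second structural problem. You claim that after one (modified) gadget application you ``again have a near tournament'', but that is not true: the three new vertices $u_1,u_2,u_3$ are adjacent to one another and to every old vertex, hence have \emph{no} non-neighbour, while the untouched old pairs still do. The resulting digraph is not a near tournament, so Theorem~\ref{evensuff} cannot be invoked on it as stated; and your final ``close up with case~(i)'' step necessarily introduces a vertex of imbalance~$0$, which wrecks the imbalance set whenever $0\notin X\cup Y$ (as in the example above).

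The paper avoids all of this by \emph{not} iterating: it adds all $2r+2s+1$ new vertices in one shot, makes them induce a regular tournament among themselves, and then wires each $u_{p_j}$ (resp.\ $u_{q_j}$) to $\tfrac{x_{p_j}}{2}$ (resp.\ $\tfrac{y_{q_j}}{2}$) of the non-adjacent pairs in $T$ so as to give it imbalance $x_{p_j}$ (resp.\ $-y_{q_j}$) directly, with the remaining connections balanced out pairwise. No decomposition of the equation $\sum x_{p_j}=\sum y_{q_j}$ into smaller pieces is ever needed.
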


\begin{proof}
Let $T$ be a near tournament realizing the imbalance sequence $[t_{i}]_{1}^{n}$ as defined in the proof of Theorem \ref{even}. 

\noindent \textit{(i)} The proof is exactly the same as part (i) of Theorem \ref{evensuff}. 

\noindent \textit{(ii)} Add $2r+2s+1$ new vertices labelled $u_{p_1},\ldots,u_{p_{2r+1}}, u_{q_1}, \ldots,u_{q_{2s}}$ to $T$. Note that in the construction that follows we will relabel them in different ways, such as $u_{1},\ldots,u_{2r+2s+1}$, for the sake of convenience. We insert arcs in $T$ using the following procedure. 

\begin{figure}[htb]
	\centering
		\includegraphics[scale=1.2]{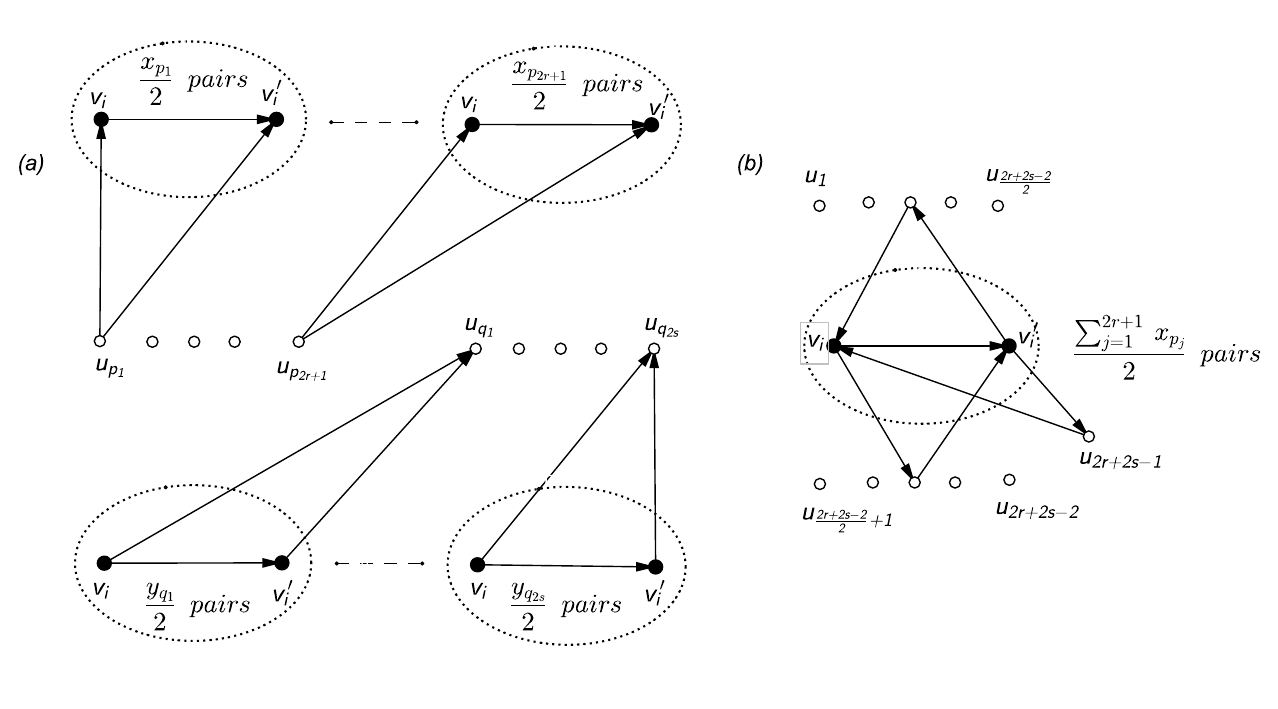}
	\caption{Figure (a) represents steps 1-4 and figure (b) step 5 of \textsc{Add Arcs}.}
	\label{tournamentex}
\end{figure}

\vspace{2mm}
\textsc{Add Arcs}:
\begin{enumerate}
\item Insert arcs so that the newly added vertices induce a regular tournament of order $2r+2s+1$. 

\item Choose any $\frac{\sum_{j=1}^{2r+1}x_{p_j}}{2}=\frac{\sum_{j=1}^{2s}y_{q_j}}{2}$ pairs $\{v_{i},v'_{i}\}$ of non-neighbouring vertices in $T$ and order them arbitrarily. For each $i=1,\ldots,\frac{\sum_{j=1}^{2r+1}x_{p_j}}{2}$ insert the arc $(v_{i}, v'_{i})$. Therefore, the imbalances of $v_{i}$ and $v'_{i}$ change by $+1$ and $-1$ respectively, for all $i$. 

\item For each $j=1,\ldots,2r+1$ choose $i=\frac{\sum_{h=1}^{p_{_{j-1}}}x_h}{2}+1,\ldots,\frac{\sum_{h=1}^{p_{j}}x_h}{2}$ and insert $x_{p_{j}}$ arcs $(u_{p_j},v_{i})$ and $(u_{p_j},v'_{i})$. This gives $u_{p_{j}}$ the imbalance $x_{p_{j}}$.  

\item For each $j=1,\ldots,2s$ choose $i=\frac{\sum_{h=1}^{q_{_{j-1}}}y_h}{2}+1,\ldots,\frac{\sum_{h=1}^{q_{j}}y_h}{2}$ and insert the arcs $(v_{i},u_{q_j})$, $(v'_{i},u_{q_j})$. Thus the imbalance of $u_{q_{j}}$ is $-y_{q_{j}}$. But the imbalances of $v_{i}$ and $v'_{i}$ are still perturbed by $+1$ and $-1$ respectively, for $i=1,\ldots,\frac{\sum_{j=1}^{2r+1}x_{p_j}}{2}$.

\item For every $i=1,\ldots,\frac{\sum_{j=1}^{2r+1}x_{p_j}}{2}$ list the $u$'s that are not already linked with $v_{i}$ and $v'_{i}$. There are exactly $2r+2s-1$ such $u$'s, for each $i$. Label them arbitrarily from $1,\ldots,2r+2s-1$. For $j=1,\ldots,\frac{2r+2s-2}{2}$ insert the arcs $(u_{j},v_{i})$ and $(v'_{i},u_{j})$. For $j=\frac{2r+2s-2}{2}+1,\ldots,2r+2s-2$ insert the arcs $(u_{j},v_{i})$ and $(v'_{i},u_{j})$. Finally, insert the arcs $(u_{2r+2s-1},v_{i})$ and $(v'_{i}, u_{2r+2s-1})$. This preserves all the imbalances.  

\item For the remaining $\frac{n-\sum_{j=1}^{2r+1}x_{p_j}}{2}$ pairs $\{v_{i}, v'_{i}\}$ of non-neighbours insert the arc $(v_{i},v'_{i})$. Label all the $u$'s arbitrarily from $1,\ldots,2r+2s+1$. For $j=1,\ldots,\frac{2r+2s}{2}$ insert the arcs $(u_{j},v_{i})$ and $(v'_{i},u_{j})$. For $j=\frac{2r+2s}{2}+1,\ldots,2r+2s$ insert the arcs $(u_{j},v_{i})$ and $(v'_{i},u_{j})$. Finally, insert the arcs $(u_{2r+2s+1},v_{i})$ and $(v'_{i}, u_{2r+2s+1})$. This preserves all the imbalances.

\begin{figure}[htb]
	\centering
		\includegraphics[scale=1.5]{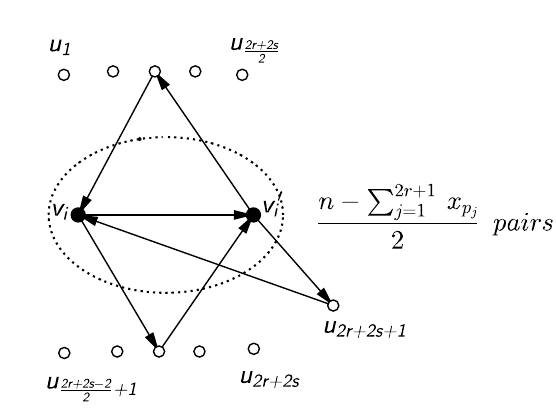}
	\caption{Inserting the remaining arcs in step 6 of \textsc{Add Arcs}.}
	\label{tournamentex}
\end{figure}

\end{enumerate}
\vspace{2mm}

Since every vertex is joined with every other vertex, the resulting digraph is a tournament. Furthermore, the imbalance of each vertex of $T$ is preserved, while the new vertices $u_{p_1},\ldots,u_{p_{2r+1}}, u_{q_1},\ldots,u_{q_{2s}}$ have imbalances $x_{p_1},\ldots,x_{p_{2r+1}}$, $-y_{q_1},\ldots,-y_{q_{2s}}$ respectively. 

\noindent \textit{(iii)} The proof is essentially the same as that of case (ii).
\end{proof}

The reader can easily draw parallels between the proofs of Theorems \ref{evensuff}(ii) and \ref{evensuffgen}(ii). For instance, the first and the last steps of both proofs are essentially achieveing the same target while steps 2-5 of the later are similar to but more complicated than step 2 of the former. 

Analyzing the above proof leads to a couple of simpler sufficient conditions for tournament imbalance sets. The first is one is a fairly straightforward consequence of Theorem \ref{evensuff} (i).   
\begin{corr}\label{zero}
If $Z$ is the empty set or it contains at least one positive and at least one negative even integer then $Z\cup \{0\}$ is the imbalance set of a tournament. 
\end{corr}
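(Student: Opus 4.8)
The plan is to reduce directly to Theorem~\ref{evensuff}(i) (equivalently Theorem~\ref{evensuffgen}(i)), after splitting into the two cases in the hypothesis. Throughout we use the standing convention of this section that $Z$ is a set of even integers, since otherwise $Z\cup\{0\}$ fails the parity requirement of Theorem~\ref{necessary}.

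First I would dispose of the trivial case $Z=\emptyset$. Then $Z\cup\{0\}=\{0\}$, and as observed at the start of Section~\ref{even case} (and in Section~\ref{odd case}), $\{0\}$ is the imbalance set of every regular tournament, so there is nothing more to do.

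Next, suppose $Z$ contains at least one positive and at least one negative even integer. I would form $X$ by taking all non-negative even integers of $Z\cup\{0\}$, listed in decreasing order $x_1>\cdots>x_l$ (so $x_l=0$ and, by assumption, $x_1>0$), and $Y$ by taking the negative even integers of $Z\cup\{0\}$, written as $-y_1,\ldots,-y_m$ with $0<y_1<\cdots<y_m$ (here $m\geq 1$ by assumption). Then $X$ and $Y$ are disjoint nonempty sets of even integers of exactly the form required in Theorem~\ref{even}, with $X\cup Y=Z\cup\{0\}$; moreover $X\cup Y\neq\{0\}$ precisely because $Z$ has a positive element. Setting $L=\sum_{i=1}^l x_i$, $M=\sum_{i=1}^m y_i$ and $n=lM+mL$ as in Theorem~\ref{even}, Theorem~\ref{even} produces a near tournament of order $n$ with imbalance set $X\cup Y$, and since $0\in X\cup Y$, Theorem~\ref{evensuff}(i) then yields a tournament of order $n+1$ whose imbalance set is $X\cup Y=Z\cup\{0\}$. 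This completes the proof.

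I do not expect any serious obstacle here: the whole content is to verify that the constructed $X$ and $Y$ meet the hypotheses of Theorem~\ref{even}, the only point requiring a moment's care being that $X\cup Y\neq\{0\}$, which is exactly where the assumption that $Z$ contains a positive integer is used.
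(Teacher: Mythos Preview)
Your proposal is correct and follows exactly the route the paper indicates: the paper states that this corollary is ``a fairly straightforward consequence of Theorem~\ref{evensuff}(i)'' without writing out the details, and your two-case argument (regular tournament for $Z=\emptyset$, then invoking Theorem~\ref{even} followed by Theorem~\ref{evensuff}(i) once $0\in X\cup Y$) is precisely how that deduction goes. Your check that $X\cup Y\neq\{0\}$ and your remark about the implicit evenness assumption on $Z$ are both appropriate.
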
 

The second condition is not as obvious and is more of an arithmetic result than a combinatorial one. First, we note that for any positive integer $p\geq 1$, the set $\{2^{p}, -2^{p}\}$ is not a tournament imbalance set as any zero sum sequence formed by the elements of this set necessarily consists of an even number of elements. However, the following sufficient condition shows that any other set of positive and negative even integers containing a power of $2$ is a tournament imbalance set.  
   
\begin{corr}\label{arithmetic}
Let $Z$ be a finite nonempty set of even integers containing at least one positive and at least one negative integer. Suppose $Z$ contains an element of the form $2^{p}$ or $-2^{p}$, for some positive integer $p\geq 1$, and $Z\neq \{2^{p}, -2^{p}\}$. Then $Z$ is the imbalance set of a tournament.
\end{corr}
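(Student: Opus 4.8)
The plan is to derive Corollary \ref{arithmetic} from Theorem \ref{evensuffgen} by checking that, under its hypotheses, at least one of the conditions (i)--(iii) of that theorem must hold. Condition (i) disposes of the case $0\in Z$ immediately, so assume $0\notin Z$ and split $Z=X\cup Y$ into its set of positive elements $X=\{x_1,\dots,x_l\}$ and the set $\{y_1,\dots,y_m\}$ of absolute values of its negative elements; both are nonempty by hypothesis. Since reversing every arc of a tournament turns a realization of an imbalance set $Z$ into a realization of $-Z$, and since conditions (ii) and (iii) of Theorem \ref{evensuffgen} are interchanged by the substitution $Z\mapsto -Z$, we may assume without loss of generality that the distinguished power of two lies in $X$, say $x_1=2^{p}$.

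What must then be produced is an equation of the type demanded by Theorem \ref{evensuffgen}(ii): a multiset of odd cardinality with entries among $x_1,\dots,x_l$ and a multiset of even cardinality with entries among $y_1,\dots,y_m$ having the same sum (or the same with the roles of $X$ and $Y$ reversed, which is (iii)). The elementary building block is that, for any $i,j$, the multiset of $y_j$ copies of $x_i$ and the multiset of $x_i$ copies of $y_j$ have the common sum $x_i y_j$; both these cardinalities are even since every element of $Z$ is even, so this block alone only witnesses the forbidden configuration. The point of having a pure power of two available is that any block whose sum equals $2^{p}$ times an odd number can be replaced by an odd number of copies of $2^{p}$, flipping the parity of one cardinality while preserving the sum. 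Concretely I would split into cases according to the $2$-adic valuations $v_2$ of the elements of $Z$ relative to $p$: if some $y_j$ has $v_2(y_j)\le p-1$, then taking $y_j/2^{v_2(y_j)}$ copies of $2^{p}$ against $2^{\,p-v_2(y_j)}$ copies of $y_j$ verifies (ii); if some $y_j$ has $v_2(y_j)\ge p+1$, a symmetric count verifies (iii); if some $x_i$ has $v_2(x_i)=p-1$, then inside a balanced block replacing two copies of $x_i$ by the odd number $2x_i/2^{p}$ of copies of $2^{p}$ verifies (ii); and if some $x_i$ has $v_2(x_i)\ge p+1$, a further variant, after scaling the block up so that all multiplicities stay positive, again verifies (ii). In each branch the hypothesis $Z\neq\{2^{p},-2^{p}\}$ is exactly what guarantees there is a second element to feed into these manoeuvres, and the verification amounts to exhibiting explicit nonnegative multiplicities of the correct parity.

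The main obstacle is to show that the cases above are exhaustive — equivalently, to treat the configuration in which every element of $Z$ other than $2^{p}$ has $2$-adic valuation exactly $p$. Here the tricks above all break down: dividing through by $2^{p}$ leaves a set all of whose elements are odd, and an odd-sized multiset of odd numbers can never have the same sum as an even-sized one, so neither (ii) nor (iii) can be met in the naive way. This is the delicate point of the whole argument, and it is precisely where one must either find a more ingenious odd/even pair of multisets or else argue directly from the extra structure forced by $Z\neq\{2^{p},-2^{p}\}$. Once the required odd/even pair has been produced, nothing remains: Theorem \ref{evensuffgen} takes the near tournament supplied by Theorem \ref{even} and attaches to it the new vertices prescribed by that pair, yielding a tournament whose imbalance set is $Z$.
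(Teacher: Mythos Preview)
Your reduction to Theorem~\ref{evensuffgen} and the case analysis on $2$-adic valuations are exactly the right framework, and this is essentially what the paper's own proof does as well: it writes a chosen opposite-sign element as $y=r2^{q}$ with $r$ odd and exhibits the identity $r\cdot 2^{p}=2^{\,p-q}\cdot y$, which (when $p\neq q$) gives an odd count on one side and an even count on the other. Your treatment is in fact more careful than the paper's, which glosses over the ``WLOG $p=\max\{p,q\}$'' step and over the possibility $q=p$, $r>1$.

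However, the ``main obstacle'' you isolate is not a delicate case awaiting a cleverer construction: it is a counterexample to the corollary as stated. Take $Z=\{6,2,-2\}$. Every hypothesis is met ($Z$ is a set of even integers with a positive and a negative element, it contains $2=2^{1}$, and $Z\neq\{2,-2\}$), yet every element is $\equiv 2\pmod 4$, so any sum of an odd number of them is $\equiv 2\pmod 4$ and cannot vanish; by Theorem~\ref{seq} (or Theorem~\ref{evennecess}) no tournament realizes $Z$. More generally, whenever all elements of $Z$ share the same $2$-adic valuation $p$, your own observation---dividing by $2^{p}$ leaves only odd integers, and an odd-sized multiset of odd numbers can never have the same sum as an even-sized one---shows that none of (i)--(iii) in Theorem~\ref{evensuffgen} can hold, so by Theorem~\ref{evennecess} such $Z$ is never a tournament imbalance set. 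The paper's proof has precisely this gap: its parenthetical fallback ``start with $-2^{p}\in Z$ and choose an $x=r2^{q}$ from $Z$ with $q\neq p$'' is unavailable exactly when every element has valuation $p$. So there is nothing to repair in your argument; it is the \emph{statement} that needs an additional hypothesis, for instance that $Z$ contains two elements of different $2$-adic valuation.
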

\begin{proof} Let us assume that for some positive integer $p\geq 1$, $2^{p}$ is an element of $Z$. Choose any negative element $-y\in Z$. Then $y=r2^{q}$, where $q\geq 1$ is a positive integer and $r\geq 1$ is an odd positive integer such that if $r=1$ then $q\neq p$. (If this is not possible, we can start with $-2^{p}\in Z$ and choose an $x=r2^{q}$ from $Z$ with $q\neq p$.) Without loss of generality, let $p=\max\{p,q\}$. We have 
\[\underbrace{2^{p}+\cdots+2^{p}}_{r \textnormal{ terms}}=\underbrace{y+\cdots+y}_{2^{p-q} \textnormal{ terms}},
\] 
\noindent and by Theorem \ref{evensuffgen}, $Z$ is a tournament imbalance set.  
\end{proof}

After deriving a number of sufficient conditions for tournament imbalance sets of even integers, the natural question is whether the sufficient conditions given in Theorem \ref{evensuffgen} are also necessary. The answer is positive as seen from the following result.

\begin{theorem}\label{evennecess}
Let $Z=X\cup Y$ be a finite nonempty set of even integers, where $X$ is the set of non-negative integers and $Y$ is the set of negative integers in $Z$. Then $Z$ is the imbalance set of a tournament if and only if either $Z=\{0\}$ or both $X$ and $Y$ are nonempty and satisfy one of the conditions (i), (ii) or (iii) of Theorem \ref{evensuffgen}.    
\end{theorem}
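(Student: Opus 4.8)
The plan is to prove the two directions separately, with the nontrivial content living entirely in the ``only if'' direction, since the ``if'' direction is immediate: if $Z=\{0\}$ then any regular tournament of odd order realizes $Z$, and if $X,Y$ are both nonempty and one of (i)--(iii) of Theorem \ref{evensuffgen} holds, then Theorem \ref{evensuffgen} itself produces a tournament with imbalance set $Z$. So I would dispose of that in one or two sentences and then concentrate on the converse.

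For the ``only if'' direction, suppose $Z$ is the imbalance set of a tournament $T$ of order $N$. By Theorem \ref{necessary}, all elements of $Z$ have the same parity as $N-1$; since $Z$ consists of even integers, $N$ is odd. Again by Theorem \ref{necessary}, either $Z=\{0\}$ (and we are done) or $Z$ contains at least one positive and at least one negative integer, i.e. both $X$ and $Y$ are nonempty (note $0$ may or may not lie in $X$; if it does, condition (i) holds and we are done, so assume $0\notin Z$, meaning every $x_i>0$ and every $y_j>0$). The imbalance sequence $[t_i]_1^N$ of $T$ uses each value of $Z$ at least once, every value is even, and $\sum_{i=1}^N t_i=0$ by the equality case of Theorem \ref{seq}. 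Partition the $N$ terms into those equal to some $x\in X$ (say there are $a_x$ of them, summing contribution $a_x x$) and those equal to some $-y\in Y$ (say $b_y$ of them). Then $\sum_{x\in X} a_x x = \sum_{y\in Y} b_y y =: \Sigma$, with every $a_x\ge 1$ and every $b_y\ge 1$. The key parity observation is that $N$ is odd, so the total number of terms $\sum_x a_x + \sum_y b_y$ is odd, hence exactly one of $\sum_x a_x$, $\sum_y b_y$ is odd and the other is even. In the first case, $\Sigma$ is a sum of an odd number of $x$'s from $X$ (with repetition, each used at least once) equal to a sum of an even number of $y$'s from $Y$ (with repetition) --- this is precisely condition (ii), except that (ii) as stated does not insist every element of $X$ appears among the $x_{p_j}$. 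But it does not need to: condition (ii) only asserts the existence of \emph{some} odd-length multiset from $X$ and even-length multiset from $Y$ with equal sums, and the multisets $\{x^{(a_x)}:x\in X\}$ and $\{y^{(b_y)}:y\in Y\}$ witness exactly that. Symmetrically, in the second case we obtain condition (iii).

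The only subtlety to check carefully is the parity bookkeeping: $\sum_x a_x x$ being even (forced, since all $x$ even) is consistent with $\sum_x a_x$ being either parity, so the parity of $\sum_x a_x$ is genuinely determined by $N$ odd rather than by any divisibility constraint, and I would make sure the degenerate sub-cases ($|X|=1$ or $|Y|=1$, or $0\in Z$) are folded in cleanly --- in particular when $0\in X$ the argument above would still run but it is cleaner to simply invoke (i) and stop. I do not anticipate a real obstacle here; the main thing is to state the correspondence between the realizing imbalance \emph{sequence} and the existence of equal-sum multisets precisely, and to observe that the ``odd/even'' split in conditions (ii) and (iii) of Theorem \ref{evensuffgen} is exactly the parity of $N=\sum a_x+\sum b_y$. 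I would close by remarking that this also shows conditions (i)--(iii) are mutually exhaustive for realizable $Z\ne\{0\}$, tying back to Corollaries \ref{zero} and \ref{arithmetic}.
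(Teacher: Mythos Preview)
Your proposal is correct and follows essentially the same route as the paper: sufficiency is delegated to Theorem~\ref{evensuffgen} (and the regular-tournament observation for $Z=\{0\}$), while necessity comes from noting that a realizing tournament has odd order $N$, so the zero-sum imbalance sequence has an odd number of terms and hence the $X$-part and $Y$-part have opposite parities, yielding (ii) or (iii). Your write-up is simply more explicit than the paper's about the multiplicities $a_x,b_y$ and about why the ``not necessarily distinct'' phrasing in (ii)/(iii) absorbs the witnessing sequence; this is helpful but not a different argument.
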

\begin{proof}
The sufficiency follows from Theorem \ref{evensuffgen}. 

To prove the necessity, suppose that $0\notin X\cup Y$ and let $X\cup Y$ be the imbalance set of a tournament of order $k$. This implies that we can form a sequence $[t_{i}]_{1}^{k}$ consisting of an odd number of not necessarily distinct terms from the elements of $X\cup Y$ that sums to zero. Since $k$ is odd, therefore either the number of terms from $X$ is odd or the number of terms from $Y$ is odd, but not both. Thus we have an odd (respectively even) number of terms $x\in X$ and an even (respectively odd) number of terms $-y\in Y$ such that $\sum{x}=\sum{y}$.  
\end{proof}

\section{Algorithmic aspects}
\label{algorithm}
The aim of this section is to study the \textit{tournament imbalance set problem} (TIS) and present an algorithm that generates a tournament realizing any tournament imbalance set. We begin by proving a theorem on the lengths of equal sum sequences chosen from two set of non-negative integers that will play a crucial role in developing the algorithm. 

\begin{theorem}\label{zerosumsequence}
Let $X$, $Y$, $l$, $m$, $L$, $M$ and $n$ be as defined in Theorem \ref{even}. If $k=p+q$ is the minimum odd number such that there exists a $p$-term sequence from $X$ and a $q$-term sequence from $-Y=\{y:-y\in Y\}$ having the same sum, then $k<n$.  
\end{theorem}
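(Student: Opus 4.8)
I would prove this by exhibiting an explicit pair of equal-sum sequences whose total length is odd and strictly less than $n = lM + mL$. The key arithmetic observation is that $L = \sum_{i=1}^l x_i$ and $M = \sum_{i=1}^m y_i$, so the sequence consisting of $M$ copies of each $x_i$ (total $lM$ terms, sum $LM$) and the sequence consisting of $L$ copies of each $y_j$ (total $mL$ terms, sum $LM$) already have equal sums. This uses \emph{all} $n$ terms and has total length $n$, which may be even. The goal is to shave off terms while keeping the sums equal and forcing the parity to odd, thereby getting $k < n$.

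\smallskip
\textbf{First step: reduce the trivial balanced solution.} If $x_1 > 0$ (i.e.\ not all of $X$ is zero — and $X\cup Y\neq\{0\}$ guarantees $X$ has a positive element whenever $Y$ is nonempty, which it is), consider the single-term sequences: the $p=1$ sequence $(x_1)$ from $X$ against some sequence from $-Y$ summing to $x_1$. Since $x_1$ is even and positive and the $y_j$ are even, we want a multiset of $y_j$'s summing to $x_1$; this need not exist. So instead I would work with the balanced solution above and symmetrically delete matched pairs. Precisely, note $\gcd$-type cancellations: the sequence ($M$ copies of $x_1$, \dots) versus ($L$ copies of $y_1$, \dots) can be trimmed — e.g.\ if $l\ge 2$ or $m\ge 2$, drop one full "block." The cleanest route: take $p = M - y_1$ copies distributed among the $x_i$ matching sum $L(M-y_1) = LM - Ly_1$, against $q = L(m-1) = mL - L$ copies of the $y_j$ for $j\ge 2$ (sum $L(M - y_1)$). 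Total length $(lM - ?) + (mL - L)$; I must bookkeep exactly, but the point is $q < mL$ strictly since $L\ge 1$, so the total is $< n$. Then adjust parity.

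\smallskip
\textbf{Parity adjustment — the main obstacle.} Having found \emph{some} equal-sum pair of total length $< n$, I must ensure an \emph{odd} such total exists below $n$. Starting from a balanced pair $(p_0, q_0)$ with $p_0 + q_0 < n$, if $p_0 + q_0$ is odd we are done; if it is even, I add or remove one term on one side and compensate on the other. The subtlety is that Theorem~\ref{evennecess}'s hypothesis (that $Z=X\cup Y$ \emph{is} a tournament imbalance set, which is the standing assumption when this theorem is applied) guarantees \emph{some} odd-length equal-sum sequence exists at all; combining that existence with the freedom to pad using the full balanced block $(lM, mL)$ of even... wait, $lM+mL=n$ need not be even here — actually $L,M$ both even forces $n$ even. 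So the balanced block has even length $n$; I can take any odd equal-sum solution (guaranteed to exist) and, if it has length $\ge n$, subtract the length-$n$ even block to reduce it while preserving oddness and equal-sum-ness, iterating until the length drops below $n$. The remaining work is to check the block-subtraction keeps all multiplicities non-negative, which is where a careful but routine argument about "which copies to remove" is needed. I expect this non-negativity bookkeeping during the subtraction step to be the main technical hurdle; everything else is immediate arithmetic.
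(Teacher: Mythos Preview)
Your plan has a real gap at exactly the point you flagged as ``routine.'' The full balanced block has $M$ copies of \emph{every} $x_i$ and $L$ copies of \emph{every} $y_j$, so subtracting it from a minimal odd solution with multiplicities $(\alpha_i),(\beta_j)$ requires $\alpha_i\ge M$ for all $i$ and $\beta_j\ge L$ for all $j$. Nothing forces this: a minimal odd solution can ignore most of $X$ and $Y$ entirely. For instance, with $X=\{2,100\}$, $Y=\{-4\}$ the minimal odd solution is $2+2=4$ (length $3$), where $x_2=100$ has multiplicity $0$, so you certainly cannot remove $M=4$ copies of it. Of course in that example $k<n$ already, but the point is that the \emph{mechanism} you propose---literal subtraction of the length-$n$ block---cannot be carried out in general, and there is no ``bookkeeping'' that repairs it: once some $\alpha_i=0$ you cannot move $x_i$-terms to the other side without destroying the required $X$-side/$Y$-side separation.

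The paper's argument is structurally different. It proceeds by induction on $n$: given a minimal odd zero-sum sequence $a_1,\dots,a_p,-b_1,\dots,-b_q$ of length $k$, it \emph{merges} two $X$-terms into their sum and two $Y$-terms into their sum, producing new sets $X',Y'$ with $l'=l-1$, $m'=m-1$, $L'=L$, $M'=M$, hence $n'=n-L-M<n-2$, while the merged sequence has odd length $k-2$. If $k>n$ this forces $k-2>n-2>n'$, contradicting the inductive hypothesis applied to $X',Y'$. So instead of subtracting a fixed block from the solution, the paper shrinks the ambient sets and the solution in tandem. If you want to push your subtraction idea through, you would need to replace the full block by a carefully chosen single pair---remove $y_{j^*}$ copies of some $x_{i^*}$ and $x_{i^*}$ copies of $y_{j^*}$---and then argue via $k\ge n$ and the equal-sum constraint that suitable $i^*,j^*$ with $\alpha_{i^*}\ge y_{j^*}$ and $\beta_{j^*}\ge x_{i^*}$ must exist; that is a genuine argument, not bookkeeping, and it is not what you wrote.
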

\begin{proof}
We observe that $k$ equals the minimal length of a zero sum sequence from $X\cup Y$. We prove the result by induction on $n$. Note that according to the conditions of Theorem \ref{even}, $X\cup Y\neq \{0\}$ and so the minimum possible value of $n$ is 4 that only corresponds to the sets $X=\{2\}$ and $Y=\{-2\}$. Since it is not possible to form a zero sum sequence with odd number of terms from $\{2, -2\}$. The next smallest value of $n$ is $6$ that corresponds to the sets $\{2,0,-2\}$, $\{2,-4\}$ and $\{4,-2\}$. Each of these sets admits a zero sum sequence of length $k=3$ and so $k<n$.   

Now we aim to show that the result holds for any $n>6$ by assuming that it holds for all values less than $n$. Let $X$ and $Y$ be any two sets of integers corresponding to $n$ and let $k\geq 3$ be the minimum odd number such that there exists a $k$-term zero sum sequence $a_{1},\ldots, a_{p},-b_{1},\ldots,-b_{q}$, where $k=p+q$, $a_{1}>\cdots>a_{p}\in X$ and $-b_{1}>\cdots> -b_{q}\in Y$. Assume that $k>n$, then $k\geq 5$. The sequence $a_{1}+a_{2},\ldots,a_{p},-b_{1}\ldots,-b_{q-1}-b_{q}$  is a zero sum sequence of minimal odd length $k'=k-2$ corresponding to the sets $X'=X-\{a_{1},a_{2}\}\cup \{a_{1}+a_{2}\}$ and $Y'=Y-\{-b_{q-1},-b_{q}\}\cup\{-b_{q-1}-b_{q}\}$. For the sets $X'$ and $Y'$ we have $n'<n-2$ and so $k'>n'$, contradicting the induction hypothesis.    
\end{proof} 

Given a set of non-negative integers, we call the search problem of finding two disjoint nonempty subsets that have identical sums the \textit{equal sum subsets problem} (ESS). Several authors \cite{bazgan1,woeginger1} have considered the corresponding decision and optimization problems. Additionally, many variants of ESS have been studied in literature. For instance, if we require subsets to be found from two different sets of positive integers the problem is called \textit{equal sum subsets from two sets} (ESST). It is known that ESS and ESST are weakly NP-hard as they admit pseudo-polynomial time algorithms \cite{cieliebak1,woeginger1}. The best known algorithm for the ESS is the dynamic programming procedure by Bazgan, Santha and Tuza \cite{bazgan1} that runs in $O(\left|I\right|\times{Sum}^{2})$ time, and determines all possible solutions of an ESS instance. Here $\left|I\right|$ and $Sum$ respectively denote the number of elements and the sum of elements of the input set. This procedure can be easily adapted to solve ESST \cite{cieliebak1}. Here we are interested in the following variation of ESS. 

\begin{definition}[\textbf{Equal Sum Sequences Problem (ESSeq)}]\label{ESSeq}
Given two sets $X$ and $Y$ of non-negative integers and a positive integer $k$, find two nonempty finite sequences $[x]$ and $[y]$ consisting of elements from $X$ and $Y$ respectively, with each element allowed to repeat at the most $k$ times, such that $\sum x = \sum y$. 
\end{definition} 

We now study the complexity of ESSeq. Clearly, ESSeq is in class NP. Furthermore, ESST corresponds to the special case $k=1$ of ESSeq. Thus ESSeq is NP-hard. In fact, ESSeq is weakly NP-hard as we can solve any instance $ESSeq(X,Y,k)$ by using the multisets $X^{(k)}$ and $Y^{(k)}$, in which each element is repeated $k$ times, as input for the pseudo-polynomial ESST algorithm. Let us call the resulting algorithm, which finds all possible solutions to an ESSeq instance, \textsc{Equal Seq}. We have shown the following. 

\begin{theorem}\label{esseq np} 
The ESSeq decision (search) problem is weakly NP-complete (weakly NP-hard). 
\end{theorem}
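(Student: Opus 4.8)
The plan is to establish the two assertions of Theorem~\ref{esseq np} separately, relying on the reductions already sketched in the paragraph preceding the statement. First I would argue membership in NP: given a candidate pair of sequences $[x]$ and $[y]$, a verifier only needs to check that every element of $[x]$ lies in $X$, every element of $[y]$ lies in $Y$, that each element of $X$ (resp. $Y$) occurs at most $k$ times, that both sequences are nonempty, and that $\sum x = \sum y$. Since a minimal solution need not have length exceeding $k\lvert X\rvert + k\lvert Y\rvert$, a polynomial-size certificate exists and can be checked in polynomial time, so ESSeq is in NP.

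Next I would prove NP-hardness by the polynomial-time reduction from ESST observed in the text: an instance of ESST on two sets of positive integers is precisely the instance $\mathrm{ESSeq}(X,Y,1)$, since allowing each element to appear at most once forces the sought sequences to be (orderings of) ordinary subsets, and two subsets with a common sum can be taken disjoint when drawn from two separate sets. As ESST is known to be NP-hard \cite{cieliebak1,woeginger1}, so is ESSeq, and combined with membership in NP this gives NP-completeness of the decision problem and NP-hardness of the search problem.

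Finally, to upgrade this to \emph{weak} NP-completeness I would exhibit the pseudo-polynomial algorithm \textsc{Equal Seq}: given $\mathrm{ESSeq}(X,Y,k)$, form the multisets $X^{(k)}$ and $Y^{(k)}$ in which each element is repeated $k$ times, and feed them to the pseudo-polynomial ESST dynamic program of Bazgan, Santha and Tuza \cite{bazgan1} as adapted in \cite{cieliebak1}. A sub-multiset of $X^{(k)}$ is exactly a sequence over $X$ using each element at most $k$ times (up to reordering), so solutions of the ESST instance on $(X^{(k)},Y^{(k)})$ are in bijection with solutions of the ESSeq instance; the running time is polynomial in $\lvert X^{(k)}\rvert + \lvert Y^{(k)}\rvert = k(\lvert X\rvert+\lvert Y\rvert)$ and in the sum of the elements, hence pseudo-polynomial in the original input. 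Since an NP-complete (NP-hard) problem that admits a pseudo-polynomial algorithm is by definition weakly NP-complete (weakly NP-hard), the theorem follows.

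The main obstacle, though a mild one, is making precise the correspondence between ``sequences with bounded repetition'' and ``sub-multisets'': one must note that the \emph{order} of terms is irrelevant to the sum, so a sequence collapses to a multiset, and that drawing from the two disjoint labelled copies $X^{(k)}$ and $Y^{(k)}$ automatically yields the disjointness required by ESST; conversely any ESST solution on these copies respects the multiplicity bound $k$ by construction. Once this dictionary is stated, everything else is a direct appeal to the cited complexity results and the definition of weak NP-completeness.
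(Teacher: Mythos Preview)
Your proposal is correct and follows essentially the same approach as the paper: membership in NP, NP-hardness via the special case $k=1$ reducing from ESST, and the pseudo-polynomial algorithm \textsc{Equal Seq} obtained by passing the multisets $X^{(k)}$, $Y^{(k)}$ to the ESST dynamic program. You have simply fleshed out the details (certificate size, the sequence--multiset dictionary, running time) that the paper leaves implicit in the paragraph preceding the theorem.
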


On the other hand, any algorithm that solves the even case of TIS must be able to check the existence of equal sum sequences $[x]_{1}^{a}$ and $[y]_{1}^{b}$ from any given nonempty finite sets $X$ and $Y$ of even integers such that $a$ and $b$ have different parity. Let $hX$ denote the set obtained by multiplying every element of a set $X$ by $h$ and $X+\{h\}$ denote the set obtained by adding a number $h$ to every element of $X$. We can solve an instance $ESSeq(X,Y,k)$ of the ESSeq decision problem by using any TIS algorithm to solve $\left|X\right|+1$ even instances $TIS(2X,2Y)$, $TIS(2(X+\{x\}),2Y)_{x\in X}$ of TIS.  

\begin{theorem}\label{np}
The odd case of TIS can be solved in linear time. On the other hand, the even case of TIS is NP-complete. Hence in general TIS is NP-complete. 
\end{theorem}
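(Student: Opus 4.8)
The plan is to establish the three assertions in turn, with the bulk of the work going into the NP-completeness of the even case; the odd case and the final "hence" are short.

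For the odd case, I would argue that Theorem~\ref{oddsuff} already gives a complete characterization: a finite nonempty set $Z$ of odd integers is a tournament imbalance set if and only if it contains at least one positive and at least one negative integer. Checking this condition requires only a single scan of the input to test the parity of every element (against each other, since in the odd case we just need them all odd) and to detect the presence of a positive and a negative element, or else that $Z=\{0\}$. This is clearly linear in the input size, so the odd case of TIS is in $\mathrm{P}$ (indeed solvable in linear time). One should be a little careful to phrase "linear time" with respect to the number of integers in $Z$, the parity check and sign check both being $O(|Z|)$.

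For the even case, membership in NP is immediate: a certificate is a tournament (equivalently, by Theorem~\ref{seq}, an imbalance sequence, whose length $k$ we must bound—here Theorem~\ref{zerosumsequence} is the key, since it guarantees a realizing sequence of length $k<n=lM+mL$, which is polynomial in the magnitudes of the input numbers but, for NP-membership, one instead exhibits the small certificate coming from the construction in Theorem~\ref{evensuffgen}, whose added vertices number $2r+2s+1$ with the $x_{p_j},y_{q_j}$ bounded by the input). For NP-hardness, I would reduce from ESSeq, which is NP-hard by Theorem~\ref{esseq np} (via its specialization to ESST at $k=1$). Given an instance $ESSeq(X,Y,k)$ with $X,Y$ sets of non-negative integers, the excerpt already indicates the reduction: form the $|X|+1$ even TIS instances $TIS(2X,2Y)$ and $TIS(2(X+\{x\}),2Y)$ for each $x\in X$, using the operators $hX$ and $X+\{h\}$. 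The point is that a solution to $ESSeq(X,Y,k)$—equal-sum sequences $[x]_1^a$ from $X$ and $[y]_1^b$ from $Y$—can always be normalized so that $a$ and $b$ have opposite parity (if $a\equiv b$, append one element $x\in X$ to each side after absorbing it, or equivalently work with $X+\{x\}$), and conversely, by Theorem~\ref{evennecess}, the set $2X\cup(-2Y)$ (or the shifted variant) is a tournament imbalance set precisely when such opposite-parity equal-sum sequences exist. Thus $ESSeq(X,Y,k)$ is a yes-instance iff at least one of the $|X|+1$ even TIS instances is a yes-instance. The reduction is computable in polynomial time since it only performs $O(|X|)$ scalings and shifts of the input, so the even case of TIS is NP-hard, hence NP-complete.

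The main obstacle, and the step I would spend the most care on, is the parity-normalization argument linking ESSeq solutions to the conditions (ii)/(iii) of Theorem~\ref{evensuffgen}: one must verify that the repetition bound $k$ in ESSeq is not an essential constraint once we pass to tournament imbalance sequences (whose length is governed by Theorem~\ref{zerosumsequence}, not by $k$), and that shifting $X$ to $X+\{x\}$ correctly simulates "adding one more copy of $x$ to the $X$-side" so that the parity of the count of $X$-terms can always be flipped. Once this correspondence is nailed down, the final claim is immediate: the odd case is polynomial and the even case is NP-complete, so TIS as a whole is NP-complete (membership in NP holds uniformly, and NP-hardness is inherited from the even case).
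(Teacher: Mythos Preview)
Your plan matches the paper's own argument: the odd case is disposed of by Theorem~\ref{oddsuff} via a linear scan, and NP-hardness of the even case is obtained by the reduction from ESSeq sketched in the paragraph immediately preceding Theorem~\ref{np} (the $|X|+1$ even TIS instances). The paper gives no further proof beyond that paragraph, so in outline you are doing exactly what the paper does.

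There is, however, one concrete misreading. The paper defines $X+\{h\}$ as the set obtained by adding $h$ to \emph{every} element of $X$ (a translate of $X$), not as adjoining an extra copy of $h$. So your proposed mechanism---``shifting $X$ to $X+\{x\}$ simulates adding one more copy of $x$ to the $X$-side so the parity of the $X$-count can be flipped''---is not what the instances $TIS(2(X+\{x\}),2Y)$ are doing, and that heuristic would not survive a careful write-up. You are right to flag the two real obstacles (handling the repetition bound $k$ from ESSeq, and converting an arbitrary equal-sum pair into one with opposite-parity lengths); the paper's text leaves both of these implicit as well, so your proposal is no less complete than the paper's in that respect. But the specific parity-flip explanation you offer does not match the paper's construction and would need to be replaced by the correct interpretation of the shift $X\mapsto X+\{x\}$ before the reduction can be verified.
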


We now present a pseudo-polynomial time algorithm that not only solves TIS but also generates a tournaments realizing any tournament imbalance set. Our algorithm is based on the proofs of Theorems \ref{odd}, \ref{evensuffgen} and \ref{zerosumsequence}. First we form a suitable $n$-term imbalance sequence and then realize it as a tournament. Lemma \ref{aux2} can be used to construct a simple digraph, with maximum possible number of arcs, realizing an imbalance sequence $[t_{i}]_{1}^{n}$. The idea is to start with an arbitrary vertex $v$ having imbalance $t_{i}$ and attach it to $\left\lfloor \frac{n-1+t_{i}}{2}\right\rfloor$ vertices by arcs directed away from $v$. If $t_{i}$ has the same parity as $n-1$ then it is joined with $n-1-\left\lfloor \frac{n-1+t_{i}}{2}\right\rfloor$ other vertices by arcs directed towards $v$. Otherwise, it is joined with $n-2-\left\lfloor \frac{n-1+t_{i}}{2}\right\rfloor$ other vertices by arcs directed towards $v$. Thus $v$ is joined to every vertex except possibly one. These steps are then repeated for every vertex without attaching any new arcs to the preprocessed vertices. We name this $O(n^{2})$ procedure \textsc{Max Realization}.  
 
Now suppose that $Z$ is a finite nonempty set of integers arranged in decreasing order. Form the sets $X=\{z\in Z:z\geq 0\}=\{x_{1}, \ldots, x_{l}\}$ and $Y=\{z\in Z:z<0\}=\{-y_{1}, \ldots, -y_{m}\}$ arranged in decreasing order. Let $L=\sum_{i=1}^{l}{x_{i}}$, $M=\sum_{i=1}^{m}{y_{i}}$ and $n=lM+mL$ as in the earlier proofs. The following algorithm outputs a tournament that realizes $Z$, whenever such a tournament exists. 

\begin{algo}[\textsc{Imbalance Set}]\label{generate}
\begin{enumerate} 

\item If either $X$ or $Y$ is empty, then $Z$ is not a tournament imbalance set. Stop.
	
\item If elements of $Z$ have different parity, $Z$ is not a tournament imbalance set. Stop.
	
\item Form the sequence $[t_{i}]_{1}^{n}={x_{1}}^{(M)},\ldots,{x_{l}}^{(M)},{-y_{1}}^{(L)},\ldots,{-y_{m}}^{(L)}.$
	
\item Call the procedure \textsc{Max Realization} to realize $[t_{i}]_{1}^{n}$ as a simple digraph $D$ with maximum number of arcs. 
	
\item If elements of $Z$ have odd parity, output $D$. End. 
	
\item If elements of $Z$ have even parity, call \textsc{Equal Seq} with the input $(X^{(n)}, (-Y)^{(n)}, n)$ to find sequences $[x]_{1}^{a}$ and $[y]_{1}^{b}$, with $a$ and $b$ having different parity and $\sum x=\sum y$. If no such sequences exist then $Z$ is not a tournament imbalance set. End. 
	
\item Add $2a+2b+1$ isolated vertices to $D$.  
	
\item Call \textsc{Add Arcs} to add $a+b$ vertices and arcs to $D$ to form a tournament $T$. Return $T$. 

\end{enumerate}
\end{algo}	

The following result shows that the procedure \textsc{Imbalance Set} runs in pseudo-polynomial time and hence TIS is weakly NP-hard. 	
\begin{theorem}\label{correct}
Algorithm \ref{generate} is correct and runs in pseudo-polynomial time. 
\end{theorem}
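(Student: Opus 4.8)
\noindent The plan is to prove the two assertions in turn: that Algorithm~\ref{generate} returns the correct answer, and that it runs in pseudo-polynomial time (termination coming along for free). For correctness I would walk through the branches, leaning on the structural results of Sections~\ref{odd case} and~\ref{even case}, and I would first set aside the degenerate instance $Z=\{0\}$ (realized by the $3$-cycle and detectable at the outset), so that $Z\neq\{0\}$ afterwards. Steps~1--2 answer ``no'' exactly when $X$ or $Y$ is empty or the elements of $Z$ do not all share a parity, and by Theorem~\ref{necessary} this verdict is always correct. So assume $X,Y$ nonempty with all elements of $Z$ of one parity. If that parity is odd, step~3 builds the sequence $[t_i]_1^n$ of Theorem~\ref{odd}, which is a tournament imbalance sequence by that theorem; since every $t_i$ has the parity of $n-1$, \textsc{Max Realization} joins each vertex to all $n-1$ others, so its output $D$ is a tournament, and because each element of $Z$ occurs in $[t_i]_1^n$ (each $x_i$ exactly $M\ge1$ times, each $-y_j$ exactly $L\ge1$ times) the imbalance set of $D$ is exactly $Z$; step~5 returns it. If the parity is even, Theorem~\ref{even} together with Lemma~\ref{aux2} gives that the output $D$ of \textsc{Max Realization} is a near tournament realizing $[t_i]_1^n$, hence realizing every element of $Z$.

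The heart of the matter is the even case from step~6 onward. By Theorem~\ref{evennecess}, $Z$ is a tournament imbalance set iff there are nonempty equal-sum sequences, one drawn from $X$ and one from $-Y$, whose lengths have different parity. I would show that \textsc{Equal Seq}$(X^{(n)},(-Y)^{(n)},n)$ detects such a pair iff one exists: by Theorem~\ref{zerosumsequence} a shortest odd-length zero-sum sequence over $X\cup Y$ has length $k<n$, so it repeats no element of $X$, and none of $-Y$, more than $n$ times; hence the multiplicity cap $n$ discards no solution. Thus if none is found, step~6 correctly rejects $Z$. Otherwise we obtain $[x]_1^a$ and $[y]_1^b$ with $a\not\equiv b\pmod 2$ and $\sum x=\sum y$, which is precisely the hypothesis of Theorem~\ref{evensuffgen}(ii) or~(iii) according as $a$ or $b$ is odd; steps~7--8 then append the $a+b$ new vertices and run \textsc{Add Arcs}, the arc-insertion procedure from the proof of that theorem. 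By that proof, the output $T$ is a tournament in which every vertex of $D$ keeps its imbalance and the new vertices acquire the terms of $[x]$ and the negatives of those of $[y]$ as imbalances---all of them elements of $Z$---so $T$ realizes $Z$.

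For the running time I would set $N=\max_{z\in Z}|z|$ and note $L\le lN$, $M\le mN$, whence $n=lM+mL\le 2lmN\le\tfrac{1}{2}|Z|^2N$ is pseudo-polynomial in the input size. Steps~1--3 cost $O(|Z|+n)$ and \textsc{Max Realization} costs $O(n^2)$. Step~6 runs the Bazgan--Santha--Tuza ESST dynamic program on an instance with $O(n|Z|)$ repeated elements and total sum $O(n|Z|N)$, hence in time $O(n|Z|\cdot(n|Z|N)^2)$. In steps~7--8, taking a shortest pair gives $a+b=k<n$ by Theorem~\ref{zerosumsequence}, so \textsc{Add Arcs} inserts $O((n+a+b)^2)=O(n^2)$ arcs. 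Every step is thus bounded by a polynomial in $|Z|$ and $N$, so the algorithm halts in pseudo-polynomial time.

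The step I expect to be the main obstacle is the correctness of step~6: one has to see at once that the multiplicity cap $n$ passed to \textsc{Equal Seq} loses no solution---which is exactly what Theorem~\ref{zerosumsequence} provides---and that the length-parity mismatch $a\not\equiv b\pmod 2$ is precisely the combinatorial hypothesis of Theorem~\ref{evensuffgen}(ii)/(iii) under which a found pair can be assembled into a tournament by \textsc{Add Arcs}. The remaining checks---that \textsc{Max Realization} yields a full tournament in the odd case and a near tournament in the even case, that \textsc{Add Arcs} yields a tournament with the stated imbalances, and the time estimates---are routine once Lemmas~\ref{aux},~\ref{aux2} and the proofs of Theorems~\ref{odd},~\ref{even},~\ref{evensuffgen} are in hand, the only quantitative ingredient being $n\le\tfrac{1}{2}|Z|^2N$.
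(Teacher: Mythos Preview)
Your proposal is correct and follows essentially the same approach as the paper: invoke Theorems~\ref{odd}, \ref{even}, \ref{evensuffgen}, \ref{evennecess} and, crucially, Theorem~\ref{zerosumsequence} (to justify that the multiplicity bound $n$ in step~6 loses no solution), and then bound the running time by the cost of \textsc{Max Realization} and \textsc{Equal Seq}. Your write-up is considerably more detailed than the paper's short proof---in particular your explicit treatment of the degenerate case $Z=\{0\}$ and of why \textsc{Max Realization} outputs a genuine tournament in the odd case are points the paper leaves implicit---but the logical skeleton is the same.
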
 
\begin{proof}
The correctness follows immediately from Theorems \ref{odd}, \ref{even}, \ref{evensuffgen} and \ref{zerosumsequence}. In particular, Theorem \ref{zerosumsequence} guarantees that in the case when $Z$ is a set of even integers, step 6 of Algorithm \ref{generate} necessarily finds the required sequences if they exist. Now note that the computational complexity of Algorithm \ref{generate} is dominated by steps 4 and 6. Step 4 can be performed in $O(n^{2})=O((lM+mL)^{2})$ time, whereas step 6 takes $O(( n\left| X \right| + n\left| Y \right|)\times(n\sum_{x\in X}x + n\sum_{-y\in Y}y)^{2})=O(n^{3}(l+m)\times(L+M)^{2})=O(n^{3}\left|Z\right|\times (L+M)^{2})$ time. The overall complexity is therefore $O(\left|Z\right|\times n^{5})$, which is pseudo-polynomial since $n$ depends on the numeric value of the input.      
\end{proof}

Thus we can use Algorithm \ref{generate} to check if a given set of integers is the imbalance set of a tournament and moreover, to construct a tournament realizing the set, if it exists. We now illustrate Algorithm \ref{generate} by showing how it generates a tournament realizing the imbalance set $\{4,2,-2\}$. 

\begin{figure}[htb]
\centering
\includegraphics[scale=0.7]{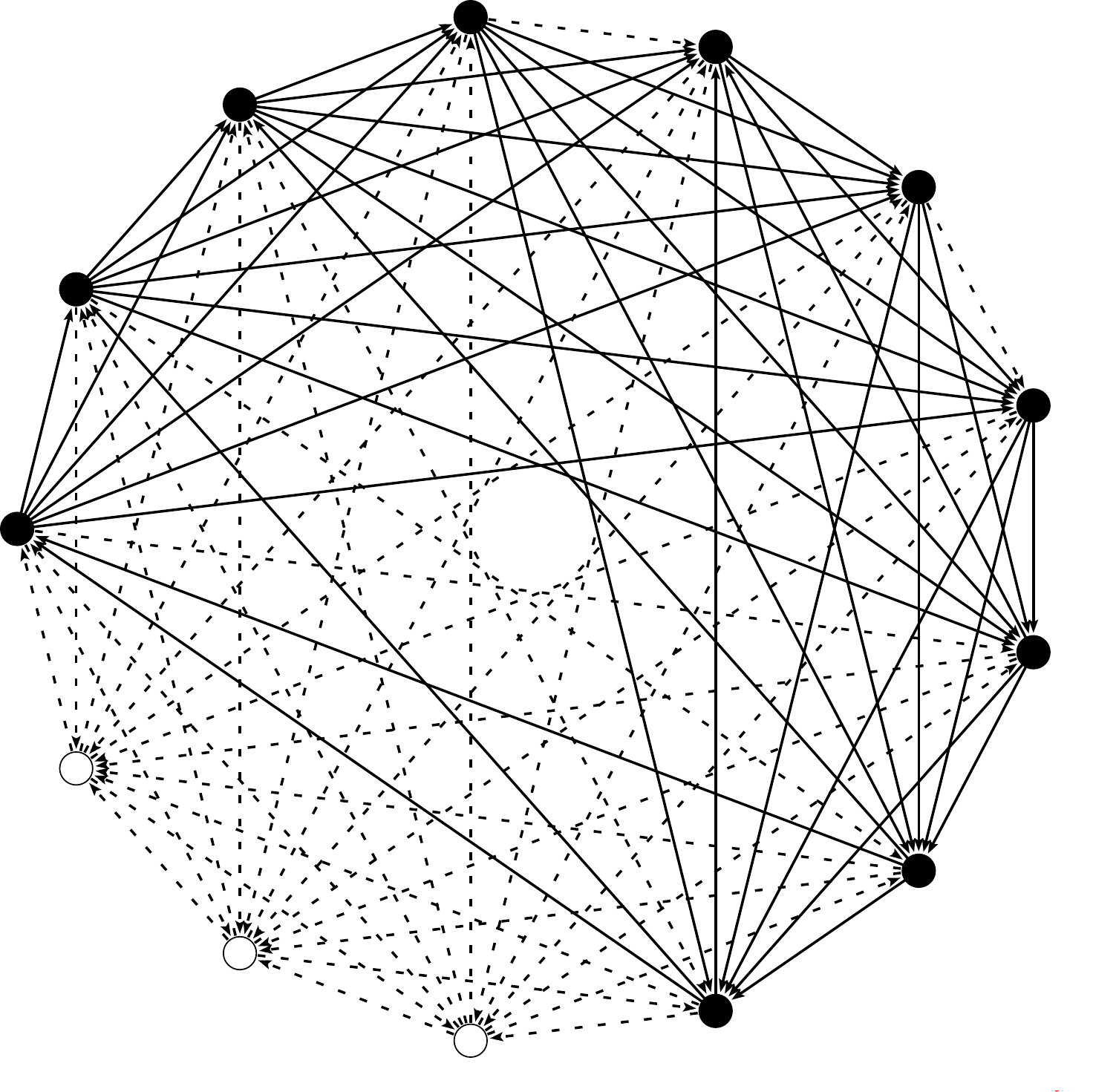}
\caption{A tournament realizing the imbalance set $\{4, 2, -2\}$ obtained from  Algorithm 1.}
\label{tournamentex}
\end{figure}

\begin{example}
Consider the set $Z=\{4, 2, -2\}$. Since $Z$ satisfies the conditions in the first two steps of Algorithm 1, the algorithm goes to step 3 and forms the sequence $4, 4, 2, 2, -2, -2, -2, -2, -2, -2$. Step 4 calls the procedure \textsc{Max Realization} to output a simple digraph of order 10 realizing $Z$. However, this simple digraph is only a near tournament and not a tournament (see the digraph induced by the black vertices in Figure \ref{tournamentex}). Since the elements of $Z$ have even parity, the algorithm proceeds to step 6 and finds the the sequences $[x_{i}]_{1}^{1}=4$ and $[y_{i}]_{1}^{2}=-2,-2$ with odd and even number of terms respectively, such that $\sum{x_{i}} = -\sum{y_{i}}$. Then step 7 adds 3 new vertices (colored white in Figure \ref{tournamentex}) to the near tournament obtained in step 4. In the end, step 8 adds the arcs (dashed arcs in Figure \ref{tournamentex}) necessary to form a tournament of order 13 in such a way that the imbalances of the old vertices are preserved and the new vertices have imbalances $4$, $-2$, and $-2$. The output of Algorithm 1 is a tournament with imbalance sequence $4, 4, 4, 2, 2, -2, -2, -2, -2, -2, -2, -2, -2$ as shown in Figure \ref{tournamentex}. 
\end{example}

The results presented in Sections 2, 3 and 4 can be used to estimate the order of a tournament realizing an imbalance set. Let us denote by $ord(Z)$ the minimal order of a tournament realizing an imbalance set $Z$. 

\begin{theorem}\label{extremal}
Let $Z$ be a tournament imbalance set (i.e., it satisfies the conditions of Theorem 2.4 or Theorem 3.6). Define $X$, $Y$, $l$, $m$, $L$, $M$ and $n$ as in the earlier results. 
\item(i) If $Z$ consists of odd integers then $ord(Z)\leq n=lM+mL$. 
\item(ii) If $Z$ consists of even integers and $0\in Z$ then $ord(Z)\leq n+1=lM+mL+1$.
\item(iii) If $Z$ consists of even integers and $0\notin Z$ then $ord(Z)< 2n=2(lM+mL)$.   
\end{theorem}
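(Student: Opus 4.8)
The plan is to read off each bound directly from the constructions given in the earlier theorems, since $ord(Z)$ is by definition the minimum over all realizing tournaments, and each earlier theorem exhibits a concrete realizing tournament of a specified order.

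For part (i), if $Z$ consists of odd integers, then by Theorem \ref{necessary} both $X$ and $Y$ are nonempty (and $Z\neq\{0\}$ since $0$ is even), so Theorem \ref{odd} applies verbatim: it constructs a tournament of order exactly $n=lM+mL$ realizing $X\cup Y=Z$. Hence $ord(Z)\le n$, and no further work is needed.

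For part (ii), if $Z$ consists of even integers with $0\in Z$, then $Z$ falls under case (i) of Theorem \ref{evensuffgen} (equivalently Theorem \ref{evensuff}(i)), whose construction starts from the near tournament of order $n$ produced by Theorem \ref{even} and adds a single vertex of imbalance $0$, yielding a tournament of order $n+1=lM+mL+1$ with imbalance set $Z$. Thus $ord(Z)\le n+1$. One small remark to include: when $Z=\{0\}$ the bound is trivially true since $ord(\{0\})=1\le n+1$, but in fact the hypothesis $X\cup Y\neq\{0\}$ in Theorem \ref{even} means we are implicitly in the case $|Z|\ge 2$; I would note that $Z=\{0\}$ is handled separately by regular tournaments and still satisfies the stated inequality.

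For part (iii), $Z$ consists of even integers with $0\notin Z$, so by Theorem \ref{evennecess} (necessity direction) $Z$ must satisfy condition (ii) or (iii) of Theorem \ref{evensuffgen}; equivalently there is a minimal-length zero-sum sequence from $X\cup Y$ of some odd length $k=p+q$. The construction in Theorem \ref{evensuffgen}(ii)/(iii) adds $2r+2s+1 = 2p+2q-1$... here I should be careful: the construction adds $2r+2s+1$ new vertices where the equal-sum sequences have $2r+1$ terms from one side and $2s$ terms from the other, so the new tournament has order $n + (2r+2s+1)$ where $(2r+1)+2s = k$ is the odd length of the zero-sum sequence. Hence the realizing tournament has order $n + 2k - 1$. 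By Theorem \ref{zerosumsequence}, $k<n$, so $k\le n-1$ (and in fact $n$ is even while $k$ is odd, so $k\le n-1$), giving order at most $n + 2(n-1) - 1 = 3n-3 < 3n$. This proves $ord(Z)<3n$, which is weaker than the claimed $ord(Z)<2n$.

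The main obstacle is therefore closing the gap between the naive bound $3n-3$ and the asserted $2n$. To get $ord(Z)<2n$ one must use a sharper count: the construction picks the \emph{minimal}-length zero-sum sequence, and I suspect the intended argument is that one can always choose $k$ so that $n + 2k - 1 < 2n$, i.e. $2k \le n$, i.e. $k \le n/2$. This would follow from a strengthening of Theorem \ref{zerosumsequence} to $k \le n/2$ (or $k < n/2 + 1$), or alternatively from observing that when $k$ is forced to be large the sets $X,Y$ themselves must be small enough that a different, more economical construction applies. I would look for such a refinement — for instance, revisiting the inductive proof of Theorem \ref{zerosumsequence} to extract the stronger bound $k<n/2+\text{const}$, or arguing that among the many zero-sum sequences the minimal odd one has length bounded by roughly half of $n$ — and present part (iii) via whichever sharper estimate actually delivers the factor $2$. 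I would flag explicitly in the writeup that parts (i) and (ii) are immediate from the cited theorems while part (iii) requires this extra quantitative input beyond the bare statement of Theorem \ref{zerosumsequence}.
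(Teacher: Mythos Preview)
Your arguments for parts (i) and (ii) match the paper's exactly. For part (iii), your overall approach is also precisely the paper's: use the construction of Theorem~\ref{evensuffgen} together with the bound of Theorem~\ref{zerosumsequence}. The only issue is an arithmetic slip in counting the added vertices, and this slip is what creates the apparent gap between $3n-3$ and $2n$.

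In Theorem~\ref{evensuffgen}(ii) the new vertices are $u_{p_1},\ldots,u_{p_{2r+1}},u_{q_1},\ldots,u_{q_{2s}}$, so exactly $(2r+1)+2s=2r+2s+1$ vertices are added. In your notation the zero-sum sequence has $p=2r+1$ terms from $X$ and $q=2s$ terms from $-Y$, with total odd length $k=p+q=2r+2s+1$. Thus the number of added vertices is $k$, \emph{not} $2k-1$; your equation ``$2r+2s+1=2p+2q-1$'' is simply wrong. The resulting tournament therefore has order $n+k$, and Theorem~\ref{zerosumsequence} gives $k<n$ directly, so the order is at most $n+(n-1)=2n-1<2n$. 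No strengthening of Theorem~\ref{zerosumsequence} is needed; once the count is corrected, your proof coincides with the paper's.
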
 
\begin{proof}
The proof of (i) and (ii) follows from Theorem \ref{odd}, while (iii) follows from Theorem \ref{evensuff}(i). For (iv), observe that the order of the tournament constructed in the proof of Theorem \ref{evensuffgen} is $n+2r+2s+1$. From Theorem \ref{zerosumsequence}, $2r+2s+1< n$. As a result, the constructed tournament can be at the most of order $2n-1$. 
\end{proof}

\bigskip
\noindent \textbf{Acknowledgements}

\noindent The author would like to thank Professor K\'{a}roly Bezdek for having many useful discussions on the topic and helping in impproving this manuscript. The author is also thankful to Dr. Shariefuddin Pirzada for drawing his attention to the imbalance set problem. This research was performed at the Center for Computational and Discrete Geometry, Department of Mathematics and Statistics, University of Calgary.

\end{document}